\newcommand{\tr}{{\rm Tr\hskip -0.3em}~}
\newcommand{\normmm}[1]{{\left\vert\kern-0.25ex\left\vert\kern-0.25ex\left\vert #1
    \right\vert\kern-0.25ex\right\vert\kern-0.25ex\right\vert}}
\newtheorem{theorem}{Theorem}[section]
\newtheorem{corollary}[theorem]{Corollary}
\newtheorem{remark}[theorem]{Remark}
\theoremstyle{definition} \theoremstyle{remark}
\numberwithin{equation}{section}
\begin{document}
 \title{Variational Representations related to Quantum R\'{e}nyi Relative Entropies}
\author{Guanghua Shi\\
 School of Mathematical Sciences, Yangzhou University, Jiangsu, China
 \\shiguanghua@yzu.edu.cn}

\maketitle

\begin{abstract}
In this paper, we focus on variational representations of some matrix symmetric norm functions that are related to the quantum R\'{e}nyi relative entropy. Concretely, we obtain variational representations of the function $(A,B)\mapsto \normmm{(B^{q/2}K^*A^pKB^{q/2})^s}$ for symmetric norms by using the H\"{o}lder inequality and Young inequality. These variational expressions enable us to make the proofs of the convexity/concavity of the trace function $(A,B)\mapsto \tr (B^{q/2}K^*A^pKB^{q/2})^s$ more clear.
\end{abstract}
MSC2010: 94A17; 81P45; 47A63; 52A41.\\[1ex]
{\bf Keywords:}  H\"{o}lder inequality; Lieb's concavity theorem; quantum R\'{e}nyi relative entropy; symmetric norm; symmetric anti-norm; variational representation.

\section{Introduction}
Let $\mathcal{M}_n$ be the set of $n\times n$ matrices and $\mathcal{P}_n$ be the set of $n\times n$  positive definite matrices. A matrix $A\in \mathcal{P}_n$ with $\tr A=1$ is called a density matrix. Many of the statements in this paper are of special interest for density matrices but we will not make such restriction. For $A, B\in \mathcal{P}_n,$ the traditional R\'{e}nyi relative entropy (due to Petz \cite{Petz86}) is defined as
\begin{eqnarray}
D_{\alpha}(A||B)=\frac{1}{\alpha-1}\log \tr (A^{\alpha}B^{1-\alpha}), \quad \alpha\in (0,\infty)\backslash \{1\}.
\end{eqnarray}
A generalization of the traditional R\'{e}nyi relative entropy was introduced by M\"{u}ller-Lennert, Dupuis, Szehr, Fehr, Tomamichel \cite{MDSFT13} and Wilde, Winter, Yang \cite{WWY14}. This entropy is called the sandwiched R\'{e}nyi relative entropy, and is defined as
\begin{eqnarray}
\tilde{D}_{\alpha}(A||B)=\frac{1}{\alpha-1}\log  F_{\alpha}(A,B), \quad \alpha\in (0,\infty)\backslash \{1\},
\end{eqnarray}
where
\begin{eqnarray}
F_\alpha(A,B)= \tr \left(B^{\frac{1-\alpha}{2\alpha}}AB^{\frac{1-\alpha}{2\alpha}}\right)^{\alpha}, \quad \alpha\in (0,\infty).
\end{eqnarray}
The trace function $F_{\alpha}(A,B)$ is a parameterized version of the fidelity
\begin{eqnarray}
F(A,B)= \tr \left(B^{\frac{1}{2}}AB^{\frac{1}{2}}\right)^{\frac{1}{2}},
\end{eqnarray}
and is called the sandwiched quasi-relative entropy.
We should notice that (\cite{MDSFT13})
\begin{eqnarray}
\lim_{\alpha\rightarrow \infty} \tilde{D}_{\alpha}(A||B)=\|\log B^{-\frac{1}{2}}AB^{-\frac{1}{2}}\|,
\end{eqnarray}
where $\|\cdot \|$ is the operator norm. The expression in $(1.5)$ coincides with the Thompson metric
\begin{eqnarray*}
d_{T}(A,B)=\max\{\log \lambda_1(AB^{-1}),\log \lambda_1(BA^{-1})\}
\end{eqnarray*}
on $\mathcal{P}_n$ (see \cite{ACS00}), and is closely related to the max-relative entropy \begin{eqnarray*}D_{max}(A||B)=\log \lambda_1(AB^{-1})\end{eqnarray*}
in quantum information theory (see \cite{Dat09}).
Moreover,
\begin{eqnarray}
\lim_{\alpha\rightarrow 1} \tilde{D}_{\alpha}(A||B)=\frac{1}{\tr A}\tr A(\log A-\log B).
\end{eqnarray}
The expression in $(1.6)$ is the quantum relative entropy introduced by Umegaki \cite{Ume62}.
Recently, Audenaert and Datta \cite{AD15} unified the above R\'{e}nyi relative entropies and introduced the $\alpha-z$ R\'{e}nyi relative entropy
\begin{eqnarray}
D_{\alpha,z}(A||B)=\frac{1}{\alpha-1}\log \tr \left(B^{\frac{1-\alpha}{2z}}A^{\frac{\alpha}{z}}B^{\frac{1-\alpha}{2z}}\right)^{z}, \quad \alpha\in (0,\infty)\backslash \{1\},\quad z>0.
\end{eqnarray}
The $\alpha-z$ R\'{e}nyi relative entropy have appeared earlier in the paper \cite{JOPP12}.

The R\'{e}nyi relative entropies or more general quantum divergences $\mathfrak{D}(\cdot||\cdot)$ should satisfy the monotonicity under the quantum channel, i.e., the completely positive trace preserving map to make them have operational meaning. That is
\begin{eqnarray}
\mathfrak{D}(\Phi(A)||\Phi(B))\le \mathfrak{D}(A||B),
\end{eqnarray}
for all CPTP maps $\Phi$ and density matrices $A, B.$
This inequality is also known as the Data Processing Inequality. Essentially, the data processing inequality is equivalent to the joint convexity or concavity of the trace functions in the definition of the quantum divergence $\mathfrak{D}.$ And the key to solve the jointly convexity problems of such trace functions is to develop the related operator convexity theorems. This work was first started by Lieb \cite{Lieb73}, who gave the so called Lieb's concavity theorem and successfully solved the convexity of Wigner-Yanase-Dyson information. After that, Lieb's concavity theorem was the impetus to several related works. Such as Epstein's theorem \cite{Eps73}, Ando's convexity theorem \cite{Ando79} and their generalizations.

The convexity of the trace function $\tr A^{\alpha}B^{1-\alpha}$ in the traditional R\'{e}nyi relative entropy $D_{\alpha}$ can be established by Lieb's concavity theorem and Ando's convexity theorem.
The convexity of $F_\alpha(A,B)$ in the definition of the sandwiched R\'{e}nyi relative entropy was established by Frank and Lieb in \cite{FL13}, and is based on the extensions of Lieb's concavity theorem and Epstein's theorem. The trace function in the  $\alpha-z$ R\'{e}nyi relative entropy $D_{\alpha,z}$ can be abstracted into
\begin{eqnarray*}
\Psi_{p,q,s}(A,B)=\tr \left(B^{\frac{q}{2}}K^*A^pKB^{\frac{q}{2}}\right)^s.
\end{eqnarray*}
For which values of $p,q,s$ does $\Psi_{p,q,s}(A,B)$ satisfy convexity/concavity draw extensively attention in recent papers. We refer the readers to to \cite{CFL19} and also \cite{CFL16, CL08, Huang20, Z20} for the whole story of development of the convexity theorems related to $\Psi_{p,q,s}(A,B).$

In the development of Lieb's concavity theorem, there are several powerful methodologies. For instance, the complex analytic theory of Herglotz (Pick) functions used by Epstein \cite{Eps73} and developed by Hiai \cite{Hia13,Hia16}; the matrix tensor technique used by Ando \cite{Ando79}; the matrix perspective method introduced by Effros \cite{Eff09}; the complex interpolation technique used by Huang \cite{Huang20}; and certainly the variational method introduced by Carlen and Lieb \cite{CL08}.

In Carlen-Lieb's paper \cite{CL08}, they constructed variational formulas by using the tracial Young inequality and proved the trace function
\begin{eqnarray*}
\Upsilon_{p,q}(A)=\tr (B^*A^pB)^{\frac{q}{p}}
\end{eqnarray*}
is convex for $1\le p\le 2,$ and $q\ge 1,$ and is concave for $0\le p\le q\le 1.$
These results are extensions of the Lieb's theorem and the Epstein's theorem. In \cite{FL13}, by using variational method, Frank and Lieb proved that the trace function $F_{\alpha}(A,B)$ is jointly concave for $1/2\le \alpha <1$ and is jointly convex for $\alpha>1.$ In \cite{CFL16}, Carlen-Frank-Lieb considered the  convexity and concavity of the trace function $\Psi_{p,q,s}.$ They proved that when $1\le p\le 2, -1\le q<0,$  the function $\Psi_{p,q,s}$ is jointly convex for $s\ge \min \{1/(p-1),1/(1+q)\};$ when $p=2,$ $\Psi_{p,q,s}$ is jointly convex for $-1\le q<0$ and $s\ge 1/(p+q);$ and when $0\le p,q \le 1, 0\le s\le 1/(p+q),$ $\Psi_{p,q,s}$ is jointly concave. In \cite{Z20}, Zhang tackled with the  Audenaert-Datta conjecture \cite{AD15} and the Carlen-Frank-Lieb conjecture \cite{CFL19}. By using variational method, he proved that when $-1\le q<0, 1\le p \le 2, s\ge 1/(p+q),$ the trace function $\Psi_{p,q,s}$ is jointly convex. Hence together with other known results, the full range of $(p,q,s)$ for $\Psi_{p,q,s}$ to be joint convex/concave are given.

There are also other uses of the variational method in quantum information theory. The well-known Gibbs variational principle and the variational expressions established by Hiai-Petz \cite{HP13}, Tropp \cite{Tro11} and Shi-Hansen \cite{SH19} enable one to establish the relationship between quantum entropies and the trace functions related to exponential/logarithm functions.

In this paper, we focus on the variational method. We will give some different variational representations of $\Psi_{p,q,s}(A,B)$ by using the H\"{o}lder inequality, Young inequality and their reverse versions. Especially, we give the critical points of the variational representations in terms of operator geometric mean $A\sharp_{\alpha}B,$ which is defined as
\begin{eqnarray*}
A\sharp_{\alpha}B=A^{\frac{1}{2}}(A^{-\frac{1}{2}}BA^{-\frac{1}{2}})^{\alpha}A^{\frac{1}{2}}
\end{eqnarray*}
for $A,B \in \mathcal{P}_n$ and $\alpha\in \mathbb{R}.$
These representations will make the proof of the convexity/concavity of $\Psi_{p,q,s}(A,B)$ more clear and enable us to give some new extensions.

This paper is organized as follows. In section 2, we establish the reverse H\"{o}lder inequality and Young inequality for symmetric norms. In section 3, we derive variational representations of some matrix functionals related to quantum R\'{e}nyi relative entropies. Finally, in section 4, we recover the convexity/concavity of $\Psi_{p,q,s}(A,B)=\tr (B^{\frac{q}{2}}K^*A^pKB^{\frac{q}{2}})^s$ and give a new generalization of Lieb's concavity theorem in terms of symmetric anti-norms.

\section{Reverse H\"{o}lder Inequality and Young Inequality}

In this section, we consider the reverse H\"{o}lder inequality and Young Inequality for matrices.
Set $x=(x_1,\ldots, x_n)\in \mathbb{R}^n.$ A function $\Phi: \mathbb{R}^n\rightarrow \mathbb{R}_+$ is called a symmetric gauge function \cite{Bha97} if it satisfies the following conditions:
\begin{enumerate}
\item[(i)] $\Phi$ is a norm on $\mathbb{R}^n,$

\item[(ii)] $\Phi(Px)=\Phi(x)$ for all $x\in \mathbb{R}^n, \quad P\in S_n,$

\item[(iii)] $\Phi(\varepsilon_1 x_1,\ldots, \varepsilon_n x_n)=\Phi(x_1,\ldots,x_n)$ if $\varepsilon_j=\pm 1,$

\item[(iv)] $\Phi(1,0,\ldots,0)=1.$
\end{enumerate}

Symmetric gauge function is convex on $\mathbb{R}^n$ and is monotone on $\mathbb{R}^n_+.$ Hence if $x\prec_{w}y$ in $\mathbb{R}^n_+,$ we have
$\Phi(x)\le \Phi(y).$

Recall the notations: $|x|=(|x_1|,\ldots, |x_n|),$ and $|x|\le |y|$ if $|x_i|\le |y_i|$ for $1\le i\le n.$
From the scalar reverse Young inequality we have
\begin{eqnarray*}
\frac{1}{r}|x_i y_i|^r\ge \frac{1}{p}|x_i|^p+\frac{1}{q}|y_i|^q, \qquad \mbox{for} \quad r,p>0, q<0 \quad \mbox{and} \quad \frac{1}{r}=\frac{1}{p}+\frac{1}{q}.
\end{eqnarray*}
Then it follows that for $ r,p>0, q<0 $ and $\frac{1}{r}=\frac{1}{p}+\frac{1}{q},$
\begin{eqnarray}
\frac{1}{r}|x y|^r\ge \frac{1}{p}|x|^p+\frac{1}{q}|y|^q.
\end{eqnarray}

\begin{theorem} For $x, y\in  \mathbb{R}^n$ and symmetric gauge function $\Phi,$
\begin{eqnarray}
\Phi\left(|x y|^r\right)^{\frac{1}{r}}\ge \Phi\left(|x|^p\right)^{\frac{1}{p}}\Phi\left(|y|^q\right)^{\frac{1}{q}}
\end{eqnarray}
holds for $r,p>0, q<0$ with $\frac{1}{r}=\frac{1}{p}+\frac{1}{q}.$
\end{theorem}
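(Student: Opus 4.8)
The plan is to derive (2.2) directly from the scalar reverse Young inequality (2.1) by a homogeneity--normalization argument, mirroring the classical proof of the forward H\"older inequality for symmetric gauge functions but with the direction of the inequality reversed because $q<0$.

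First I would record the two homogeneity properties of (2.2). Writing $|xy|$ for the entrywise product, both sides are positively homogeneous of degree one separately in $x$ and in $y$: replacing $x$ by $\lambda x$ with $\lambda>0$ multiplies each side by $\lambda$, since $\Phi(\lambda^r|xy|^r)^{1/r}=\lambda\,\Phi(|xy|^r)^{1/r}$ and $\Phi(\lambda^p|x|^p)^{1/p}=\lambda\,\Phi(|x|^p)^{1/p}$, and similarly for $y\mapsto\mu y$. Using these two \emph{independent} rescalings I may assume the normalization $\Phi(|x|^p)=\Phi(|y|^q)=1$ (which already forces every $y_i\neq0$, as is needed for $|y|^q$ to make sense when $q<0$). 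Under this normalization the right-hand side of (2.2) equals $1$, so the whole theorem reduces to the single claim $\Phi(|xy|^r)\ge1$.

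Next I would feed the scalar inequality (2.1) into $\Phi$. The subtlety -- and the one genuinely new point compared with the forward case -- is that the term $\tfrac1q|y|^q$ carries a negative coefficient, so I cannot apply the monotone, subadditive functional $\Phi$ to (2.1) as written. Instead I rearrange (2.1) entrywise, writing $\tfrac1q=-\tfrac1{|q|}$, into $\tfrac1p|x|^p\le\tfrac1r|xy|^r+\tfrac1{|q|}|y|^q$, which is now an inequality between vectors with nonnegative entries. Monotonicity of $\Phi$ on $\mathbb{R}^n_+$, followed by the triangle inequality and positive homogeneity, then yields $\tfrac1p\,\Phi(|x|^p)\le\tfrac1r\,\Phi(|xy|^r)+\tfrac1{|q|}\,\Phi(|y|^q)$.

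Finally I would insert the normalization and the exponent relation $\tfrac1r=\tfrac1p+\tfrac1q=\tfrac1p-\tfrac1{|q|}$: the last inequality becomes $\tfrac1p\le\tfrac1r\,\Phi(|xy|^r)+\tfrac1{|q|}$, that is $\tfrac1r=\tfrac1p-\tfrac1{|q|}\le\tfrac1r\,\Phi(|xy|^r)$, whence $\Phi(|xy|^r)\ge1$, exactly the reduced claim. The main obstacle is thus conceptual rather than computational: one must reverse the direction in which $\Phi$ is applied, bounding $\Phi(|x|^p)$ from above instead of $\Phi(|xy|^r)$, and track the sign of $q$ carefully throughout; once the rearrangement of (2.1) is in place, monotonicity and subadditivity close the argument just as in the classical H\"older proof.
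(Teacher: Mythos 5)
Your proof is correct and follows essentially the same route as the paper: both rearrange the scalar reverse Young inequality so that every term is nonnegative, apply monotonicity together with subadditivity/convexity of $\Phi$ on $\mathbb{R}^n_+$ to obtain the additive inequality $\tfrac{1}{r}\Phi(|xy|^r)\ge\tfrac{1}{p}\Phi(|x|^p)+\tfrac{1}{q}\Phi(|y|^q)$, and then use homogeneity to upgrade it to the multiplicative form. The only difference is organizational: the paper performs the scaling at the end (substituting $tx$, $t^{-1}y$ and maximizing over $t>0$), whereas you normalize $\Phi(|x|^p)=\Phi(|y|^q)=1$ at the start; these are two equivalent executions of the same homogeneity argument.
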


\begin{proof}
By inequality $(2.1),$ it is easy to see
\begin{eqnarray*}
\frac{p}{r}|x y|^r-\frac{p}{q}|y|^q\ge |x|^p.
\end{eqnarray*}
Since $\frac{p}{r}, -\frac{p}{q}>0$ and $\frac{p}{r}+(-\frac{p}{q})=1,$ it follows from the monotonicity and convexity of $\Phi$ on $\mathbb{R}^n_+$ that
\begin{eqnarray*}
\frac{1}{r}\Phi\left(|x y|^r\right)\ge \frac{1}{p}\Phi\left(|x|^p\right)+\frac{1}{q}\Phi\left(|y|^q\right).
\end{eqnarray*}
For $t>0,$ by replacing $x, y$ by $tx$ and $t^{-1}y,$ we have
\begin{eqnarray*}
\Phi\left(|x y|^r\right)\ge \max_{t>0} \left\{\frac{r}{p}t^p\Phi\left(|x|^p\right)+\frac{r}{q} t^{-q}\Phi\left(|y|^q\right)\right\}.
\end{eqnarray*}
The function
\begin{eqnarray*}
\varphi (t)=\frac{r}{p}t^pa+\frac{r}{q} t^{-q}b, \qquad \mbox{where} \quad t, a, b>0,
\end{eqnarray*}
gets its maximum at the point
$t_0=(\frac{b}{a})^{\frac{1}{p+q}},$ and
\begin{eqnarray*}
\varphi(t_0)=a^{\frac{r}{p}}b^{\frac{r}{q}}.
\end{eqnarray*}
Hence we have
\begin{eqnarray*}
\Phi\left(|x y|^r\right)^{\frac{1}{r}}\ge \Phi\left(|x|^p\right)^{\frac{1}{p}}\Phi\left(|y|^q\right)^{\frac{1}{q}}.
\end{eqnarray*}
\end{proof}

Denote $s(A)$ the $n-$vector whose coordinates are the singular values of the matrix $A\in \mathcal{M}_n$ in the decreasing order, i.e. $s_1(A)\ge s_2(A)\ge \ldots\ge s_n(A).$ Given a symmetric gauge function $\Phi$ on $\mathbb{R}_n,$ the function
\begin{eqnarray*}
\normmm{A}:=\Phi(s(A))
\end{eqnarray*}
defines a symmetric norm (unitarily invariant norm) on $\mathcal{M}_n.$
It is well-known that the H\"{o}lder inequality for symmetric norm
\begin{eqnarray}
\normmm{|AB|^r}^{\frac{1}{r}}\le \normmm{|A|^p}^{\frac{1}{p}}\cdot \normmm{|B|^{q}}^{\frac{1}{q}}
\end{eqnarray}
holds for  $r, p, q>0$ with $\frac{1}{r}=\frac{1}{p}+\frac{1}{q},$ \cite[(IV.43)]{Bha97}.
Now we consider the reverse H\"{o}lder inequality for symmetric norms.

\begin{theorem} For symmetric norm  $\normmm{\cdot}$ and matrices $A, B$ with $B$ invertible, we have
\begin{eqnarray}
\normmm{|AB|^r}^{\frac{1}{r}}\ge \normmm{|A|^p}^{\frac{1}{p}}\cdot \normmm{|B|^{q}}^{\frac{1}{q}}
\end{eqnarray}
holds for $r, p>0, q<0$ and $\frac{1}{r}=\frac{1}{p}+\frac{1}{q}.$
\end{theorem}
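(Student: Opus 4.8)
The plan is to reduce the reverse inequality to the ordinary Hölder inequality (2.3) rather than to re-run the gauge-function argument of Theorem 2.1 at the matrix level. The point is that, after a change of variables, the reverse statement is simply the forward statement read backwards. Since $B$ is invertible, I would write $A=(AB)B^{-1}$ and apply the ordinary Hölder inequality to the pair $AB$ and $B^{-1}$, whose product is exactly $A$.

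The exponents must be chosen so that all three are positive and the harmonic relation of (2.3) holds. Taking the outer exponent to be $p$ and splitting it as $\frac{1}{p}=\frac{1}{r}+\frac{1}{-q}$ — which is just the hypothesis $\frac{1}{r}=\frac{1}{p}+\frac{1}{q}$ rearranged, and which guarantees $r>0$ and $-q>0$ — inequality (2.3) gives
\[
\normmm{|A|^p}^{\frac{1}{p}}\le \normmm{|AB|^r}^{\frac{1}{r}}\cdot \normmm{|B^{-1}|^{-q}}^{\frac{1}{-q}}.
\]

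Next I would identify the last factor with $\normmm{|B|^q}^{-1/q}$. This is the step to watch, and the only genuine obstacle. It rests on the fact that the singular values of $B^{-1}$ are the reciprocals of those of $B$, so that the entries of $s(|B^{-1}|^{-q})$ coincide, up to a permutation, with those of $s(|B|^q)$; here $q<0$ and $|B|^q$ is defined through the functional calculus on the positive definite matrix $|B|$. Because the underlying symmetric gauge function $\Phi$ is permutation invariant, this yields $\normmm{|B^{-1}|^{-q}}=\normmm{|B|^q}$, and raising to the power $1/(-q)$ produces $\normmm{|B|^q}^{-1/q}$.

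Finally, since $q<0$ the quantity $\normmm{|B|^q}^{-1/q}$ is strictly positive, so dividing the displayed inequality through by it gives $\normmm{|A|^p}^{1/p}\normmm{|B|^q}^{1/q}\le \normmm{|AB|^r}^{1/r}$, which is exactly (2.4). Apart from the singular-value bookkeeping in the middle step, everything reduces to a single application of (2.3) together with elementary rearrangement, so I expect the write-up to be short.
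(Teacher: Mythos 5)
Your proof is correct, but it takes a genuinely different route from the paper. The paper proves the inequality ``from scratch'' at the level of singular values: it invokes the Gelfand--Naimark theorem to get the log-majorization $(s_i(A)s_{n-i+1}(B))\prec_{\log} s(AB)$, converts this to weak majorization, and then applies the vector-level reverse H\"{o}lder inequality (Theorem 2.1) to the singular value vectors. You instead use the standard duality trick for reverse H\"{o}lder inequalities: factor $A=(AB)B^{-1}$, apply the \emph{forward} matrix H\"{o}lder inequality $(2.3)$ with the rearranged exponent relation $\frac{1}{p}=\frac{1}{r}+\frac{1}{-q}$ (all three exponents positive), and then divide. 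Your middle step is sound: writing $B=U|B|$ with $U$ unitary gives $|B^{-1}|=U|B|^{-1}U^*$, so $|B^{-1}|^{-q}=U|B|^{q}U^*$ is in fact unitarily equivalent to $|B|^q$, and $\normmm{\,|B^{-1}|^{-q}}=\normmm{\,|B|^q}$ follows from unitary invariance alone (even more directly than from the permutation invariance you cite). What each approach buys: yours is shorter and more elementary, needing only the known inequality $(2.3)$ from Bhatia plus unitary invariance, and it makes transparent exactly where invertibility of $B$ enters (the factorization itself); the paper's approach is heavier but yields the stronger majorization statement $(s_i^r(A)s_{n-i+1}^{r}(B))\prec_{w} s^r(AB)$ as an intermediate product, and it keeps the matrix theorem structurally parallel to the scalar/vector Theorem 2.1, with the Gelfand--Naimark theorem as the bridge.
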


\begin{proof}By Gelfand-Naimark Theorem (see \cite[Theorem 6.13]{HP14}) we have
\begin{eqnarray}
(s_i(A)s_{n-i+1}(B))\prec_{\log} s(AB).
\end{eqnarray}
Since $r>0,$ we have
\begin{eqnarray*}
(s^r_i(A)s_{n-i+1}^{r}(B))\prec_{\log} s^r(AB).
\end{eqnarray*}
It follows that
\begin{eqnarray*}
(s^r_i(A)s_{n-i+1}^{r}(B))\prec_{w} s^r(AB).
\end{eqnarray*}
Thus for matrices $A, B,$ with $B$ invertible and $r,p>0, q<0$ with $\frac{1}{r}=\frac{1}{p}+\frac{1}{q},$
\begin{eqnarray*}
\Phi \left(s^r(AB)\right)^{\frac{1}{r}}&\ge& \Phi \left(\left(s^r_i(A)s_{n-i+1}^{r}(B)\right)\right)^{\frac{1}{r}}\\&\ge&  \Phi\left(s^p(A)\right)^{\frac{1}{p}}\Phi\left((s_{n-i+1}^{q}(B))\right)^{\frac{1}{q}}
\\&=&\Phi\left(s(|A|^p)\right)^{\frac{1}{p}}\Phi\left(s(|B|^q)\right)^{\frac{1}{q}}.
\end{eqnarray*}
Hence it follows that
\begin{eqnarray*}
\normmm{|AB|^r}^{\frac{1}{r}}\ge \normmm{|A|^p}^{\frac{1}{p}}\cdot\normmm{|B|^{q}}^{\frac{1}{q}}.
\end{eqnarray*}
\end{proof}

The following corollaries are easy to get.

\begin{corollary} For matrices $A, B,$ with $B$ invertible, the reverse Young inequality for symmetric norm
\begin{eqnarray}
\frac{1}{r}\normmm{|AB|^r}\ge \frac{1}{p}\normmm{|A|^p}+ \frac{1}{q}\normmm{|B|^{q}}
\end{eqnarray}
holds for $r, p>0, q<0$ and $\frac{1}{r}=\frac{1}{p}+\frac{1}{q}.$
\end{corollary}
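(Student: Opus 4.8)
The plan is to obtain this additive inequality from the multiplicative reverse H\"{o}lder inequality $(2.4)$ in precisely the way the classical Young inequality is recovered from H\"{o}lder. Write, for brevity, $X=\normmm{|A|^p}^{\frac{1}{p}}$, $Y=\normmm{|B|^q}^{\frac{1}{q}}$ and $Z=\normmm{|AB|^r}^{\frac{1}{r}}$; these are nonnegative real numbers, and since $B$ is invertible we have $\normmm{|B|^q}>0$, hence $Y>0$. In this notation the reverse H\"{o}lder inequality $(2.4)$ reads simply $Z\ge XY$.

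First I would raise this estimate to the power $r$. As $r>0$, the map $t\mapsto t^r$ is increasing on $\mathbb{R}_+$, so $Z\ge XY\ge 0$ yields $Z^r\ge (XY)^r$, that is,
\[
\normmm{|AB|^r}\ \ge\ \normmm{|A|^p}^{\frac{r}{p}}\,\normmm{|B|^q}^{\frac{r}{q}}.
\]
Dividing by $r>0$ preserves the direction of the inequality.

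Next I would feed the two nonnegative numbers $X$ and $Y$ into the scalar reverse Young inequality $(2.1)$. With $r,p>0$, $q<0$ and $\frac{1}{r}=\frac{1}{p}+\frac{1}{q}$, inequality $(2.1)$ gives $\frac{1}{r}(XY)^r\ge \frac{1}{p}X^p+\frac{1}{q}Y^q$, and since $X^p=\normmm{|A|^p}$ and $Y^q=\normmm{|B|^q}$, chaining this with the previous display produces
\[
\frac{1}{r}\normmm{|AB|^r}\ \ge\ \frac{1}{r}(XY)^r\ \ge\ \frac{1}{p}\normmm{|A|^p}+\frac{1}{q}\normmm{|B|^q},
\]
which is exactly $(2.6)$.

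I expect no real obstacle here, since all three ingredients---the reverse H\"{o}lder inequality $(2.4)$, the monotonicity of $t\mapsto t^r$, and the scalar reverse Young inequality $(2.1)$---are already in hand. The only points demanding a little care are the sign bookkeeping (dividing by $r$ and $p$ preserves the inequalities, while the term $\frac{1}{q}\normmm{|B|^q}$ is negative because $q<0$) and the role of the invertibility of $B$, which is what guarantees $Y>0$ so that the quantity $Y^q$ appearing in $(2.1)$ is well defined.
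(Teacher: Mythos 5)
Your proposal is correct and takes essentially the route the paper intends: the paper gives no explicit proof (it labels this corollary as ``easy to get'' from Theorem 2.2), and your argument---raising the reverse H\"{o}lder inequality $(2.4)$ to the $r$-th power and then applying the scalar reverse Young inequality $(2.1)$ to $X=\normmm{|A|^p}^{1/p}$ and $Y=\normmm{|B|^q}^{1/q}$---is the standard passage from the multiplicative to the additive form. Your observation that invertibility of $B$ guarantees $Y>0$, so that $Y^q$ is well defined for $q<0$, is exactly the right point of care.
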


\begin{corollary} For matrices $A, B,$ with $B$ invertible,
the reverse tracial H\"{o}lder inequality
\begin{eqnarray*}
\left(\tr |AB|^r\right)^{\frac{1}{r}}\ge \left(\tr |A|^p\right)^{\frac{1}{p}}\left(\tr |B|^{q}\right)^{\frac{1}{q}}
\end{eqnarray*}
holds for $r, p>0, q<0$ and $\frac{1}{r}=\frac{1}{p}+\frac{1}{q}.$
\end{corollary}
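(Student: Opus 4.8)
The plan is to recognize this reverse tracial Hölder inequality as nothing more than the specialization of Theorem 2.3 to the Schatten trace norm, so the proof should be a short translation rather than a fresh argument. First I would take the symmetric gauge function $\Phi$ to be the $\ell_1$-norm on $\mathbb{R}^n$, namely $\Phi(x)=\sum_{i=1}^n|x_i|$. One checks at once that this $\Phi$ satisfies conditions (i)--(iv): it is a genuine norm, it is invariant under coordinate permutations and sign changes, and $\Phi(1,0,\ldots,0)=1$. Hence $\Phi$ is admissible and the induced symmetric norm is precisely the trace norm, $\normmm{A}=\Phi(s(A))=\sum_i s_i(A)=\tr|A|$.

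Next I would convert each of the three norm factors in Theorem 2.3 into a trace of a power. Since $|A|^p$ is positive definite with eigenvalues $s_i(A)^p$, its own singular values are exactly $s_i(A)^p$, so $\normmm{|A|^p}=\Phi\bigl(s(|A|^p)\bigr)=\sum_i s_i(A)^p=\tr|A|^p$, and in the same way $\normmm{|AB|^r}=\tr|AB|^r$. For the factor involving $B$ the hypothesis that $B$ is invertible is exactly what is needed: it forces $|B|=(B^*B)^{1/2}$ to be positive definite, so that $|B|^q$ is well defined even though $q<0$, with eigenvalues $s_i(B)^q$. A small point to record here is that, because $q<0$, the numbers $s_i(B)^q$ are not in decreasing order, but since $\Phi$ is permutation invariant this is immaterial and we still obtain $\normmm{|B|^q}=\sum_i s_i(B)^q=\tr|B|^q$.

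Finally I would substitute these three identities into the conclusion $\normmm{|AB|^r}^{1/r}\ge\normmm{|A|^p}^{1/p}\normmm{|B|^q}^{1/q}$ of Theorem 2.3, valid for $r,p>0$, $q<0$ with $\tfrac1r=\tfrac1p+\tfrac1q$, which yields $(\tr|AB|^r)^{1/r}\ge(\tr|A|^p)^{1/p}(\tr|B|^q)^{1/q}$, exactly the claimed inequality. I do not expect any genuine obstacle: all the analytic content, in particular the reverse scalar Young inequality, the passage through the weak-majorization of Gelfand--Naimark, and the monotonicity and convexity of $\Phi$, has already been absorbed into Theorem 2.3. The only item demanding attention is the bookkeeping just described, ensuring that the gauge function evaluated on powered singular values genuinely reproduces the traces and that $|B|^q$ makes sense under the invertibility hypothesis.
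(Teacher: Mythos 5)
Your proof is correct and is exactly the route the paper intends: Corollary 2.4 is stated as an immediate consequence of Theorem 2.2, obtained by specializing the symmetric norm to the trace norm $\normmm{X}=\Phi(s(X))=\tr|X|$ coming from the $\ell_1$ gauge function, with the same bookkeeping that $\normmm{|A|^p}=\tr|A|^p$, etc. Your attention to the invertibility of $B$ (so $|B|^q$ with $q<0$ makes sense) and to permutation invariance is the right level of care, and nothing further is needed.
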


\begin{corollary} For matrices $A, B,$ with $B$ invertible,
the reverse tracial Young inequality
\begin{eqnarray*}
\frac{1}{r}\tr |AB|^r\ge \frac{1}{p}\tr |A|^p+\frac{1}{q}\tr |B|^{q}
\end{eqnarray*}
holds for $r, p>0, q<0$ and $\frac{1}{r}=\frac{1}{p}+\frac{1}{q}.$
\end{corollary}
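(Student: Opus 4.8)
The plan is to obtain this inequality as an immediate specialization of the reverse Young inequality for symmetric norms (Corollary~2.4). The key observation is that the trace norm $\normmm{X}_1:=\sum_i s_i(X)=\tr|X|$ is itself a symmetric norm: it is the unitarily invariant norm generated by the $\ell^1$ symmetric gauge function $\Phi(x)=\sum_i|x_i|$, which trivially satisfies conditions (i)--(iv) above. Moreover, whenever $M$ is positive semidefinite its singular values coincide with its eigenvalues, so that $\normmm{M}_1=\tr M$.

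First I would check that the three matrices appearing in Corollary~2.4, namely $|AB|^r$, $|A|^p$ and $|B|^q$, are all positive semidefinite. The moduli $|AB|$, $|A|$, $|B|$ are positive semidefinite by definition, and raising a positive semidefinite matrix to a positive real power preserves positive semidefiniteness; since $B$ is invertible, $|B|=(B^*B)^{1/2}$ is positive definite, so $|B|^q$ is well defined and positive definite even for $q<0$. Hence for each of these matrices the trace norm reduces to the ordinary trace.

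Then I would simply evaluate Corollary~2.4 at the trace norm: replacing every $\normmm{\cdot}$ by $\tr(\cdot)$ and using the reductions above turns
\[
\frac{1}{r}\normmm{|AB|^r}\ge \frac{1}{p}\normmm{|A|^p}+\frac{1}{q}\normmm{|B|^q}
\]
into exactly the desired
\[
\frac{1}{r}\tr|AB|^r\ge \frac{1}{p}\tr|A|^p+\frac{1}{q}\tr|B|^q,
\]
valid for $r,p>0$, $q<0$ with $1/r=1/p+1/q$.

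There is essentially no analytic obstacle here; the only point requiring care is the bookkeeping that lets one pass from the symmetric-norm statement to the trace statement, i.e.\ verifying that all matrices in sight are positive (semi)definite so that $\normmm{\cdot}_1=\tr(\cdot)$. Alternatively, one could derive the inequality from the reverse tracial H\"{o}lder inequality (Corollary~2.5): writing $a=\tr|A|^p$, $b=\tr|B|^q$ and $u=\tr|AB|^r$, that corollary gives $u\ge a^{r/p}b^{r/q}$ with $r/p+r/q=1$ and $r/p>0>r/q$, and the scalar reverse AM--GM inequality (the trace-level analogue of $(2.1)$) then yields $u\ge \tfrac{r}{p}a+\tfrac{r}{q}b$, which is again the claim after dividing by $r$. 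I would favour the first route, since it requires no further scalar inequality beyond what is already packaged in Corollary~2.4.
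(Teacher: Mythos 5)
Your proposal is correct and matches the paper's intended derivation: the paper gives no explicit proof, merely noting these corollaries are ``easy to get'' from the symmetric-norm results, and your specialization of the symmetric-norm reverse Young inequality to the trace norm (together with the observation that $|AB|^r$, $|A|^p$, $|B|^q$ are positive semidefinite, so the trace norm reduces to the trace) is exactly that routine step. One bookkeeping correction: in the paper's numbering the reverse Young inequality for symmetric norms is Corollary 2.3 and the reverse tracial H\"{o}lder inequality is Corollary 2.4, so your citations are shifted by one.
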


A symmetric anti-norm $\normmm{\cdot}_{!}$ on $\mathcal{P}_n$ is a non-negative continuous function that is positive homogeneous, unitary invariant and satisfying the super-additivity:
\begin{eqnarray*}\normmm{A+B}_!\ge \normmm{A}_!+\normmm{B}_! \quad \mbox{for all}\quad A,B\in \mathcal{P}_n.
\end{eqnarray*}
It is easy to see that a symmetric anti-norm on $\mathcal{P}_n$ is concave. And if $\normmm{\cdot}$ is a symmetric norm and $p>0,$ then for invertible matrix $A,$ $\normmm{A}_{!}:=\normmm{A^{-p}}^{-1/p}$ is a symmetric anti-norm.
For more information about the symmetric anti-norm and also reverse H\"{o}lder inequality for matrix we refer the readers to \cite{BH14}.

Now we define
\begin{eqnarray*}||A||_p=\left(\sum_{i=1}^n s_i^p(A)\right)^{1/p}
\end{eqnarray*}
for $p\in \mathbb{R}\setminus\{0\}.$ When $p<0,$ we set $A$ invertible, and in this case $||\cdot||_p$ is the negative Schatten anti-norm, which is a typical symmetric anti-norm. When $0<p<1,$ it is a quasi-norm and is also a symmetric anti-norm.  When $p\ge 1,$ it is the Schatten p-norm.

By Corollary 2.4, we can obtain the following well-known reverse H\"{o}lder inequality for Schatten norms.

\begin{corollary} Suppose $r, p>0$ and $q<0$ satisfying $1/r=1/p+1/q.$ Then for matrices $A,B$ with $B$ invertible,
\begin{eqnarray}
||AB||_r\ge ||A||_{p} ||B||_{q}.
\end{eqnarray}
\end{corollary}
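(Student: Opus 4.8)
The plan is to reduce the statement directly to the reverse tracial H\"{o}lder inequality of Corollary 2.4 by rewriting each Schatten (anti-)norm as the trace of a power of the modulus. Since Corollary 2.4 is already phrased entirely in terms of $\tr |A|^p$, $\tr |B|^q$ and $\tr |AB|^r$, the only real work is to match these trace quantities with $\|A\|_p$, $\|B\|_q$ and $\|AB\|_r$ as they are defined here.

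First I would record the elementary identity
\begin{eqnarray*}
\|X\|_t=\left(\tr |X|^t\right)^{1/t}, \qquad t\in \mathbb{R}\setminus\{0\},
\end{eqnarray*}
valid for invertible $X$ when $t<0$. This is immediate from the spectral picture of $|X|=(X^*X)^{1/2}$: its eigenvalues are exactly the singular values $s_1(X),\ldots,s_n(X)$, so the eigenvalues of $|X|^t$ are the numbers $s_i(X)^t$ and hence $\tr |X|^t=\sum_{i=1}^{n}s_i^t(X)$. Raising to the power $1/t$ recovers the definition of $\|X\|_t$. For $t<0$ the invertibility of $X$ guarantees $s_i(X)>0$, so each $s_i(X)^t$ is well defined and this is precisely the regime in which $\|\cdot\|_t$ is the negative Schatten anti-norm rather than a genuine norm.

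With this identity in hand I apply it three times, to $X=AB$ with exponent $r$, to $X=A$ with exponent $p$, and to $X=B$ with exponent $q$. Under the stated hypotheses $r,p>0$, $q<0$, $1/r=1/p+1/q$, and with $B$ invertible, substituting these identities into the conclusion of Corollary 2.4,
\begin{eqnarray*}
\left(\tr |AB|^r\right)^{1/r}\ge \left(\tr |A|^p\right)^{1/p}\left(\tr |B|^q\right)^{1/q},
\end{eqnarray*}
yields $\|AB\|_r\ge \|A\|_p\|B\|_q$ at once. There is essentially no analytic obstacle: the substance of the inequality is entirely supplied by Corollary 2.4, and the invertibility hypothesis on $B$ is exactly what is needed both to make $\|B\|_q$ meaningful for the negative exponent $q$ and to legitimately invoke that corollary. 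The only point deserving a moment's care is the bookkeeping of the sign of $q$ in the definition of $\|\cdot\|_q$, which the identity above handles uniformly.
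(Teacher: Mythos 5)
Your proposal is correct and follows exactly the paper's route: the paper states Corollary 2.6 as an immediate consequence of Corollary 2.4 (the reverse tracial H\"{o}lder inequality), which is precisely your reduction via the identity $\|X\|_t=(\tr |X|^t)^{1/t}$. Your write-up simply makes explicit the bookkeeping the paper leaves implicit, including the role of invertibility of $B$ for the negative exponent.
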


The reverse H\"{o}lder inequality for Schatten norm $(2.7)$ can be found in many literatures. For example, we refer the readers to \cite{Bei13}, where Beigi showed that sandwiched R\'{e}nyi divergence satisfies the data processing inequality for $\alpha>1$ by using H\"{o}lder inequality and the theory of complex interpolation.

\section{Variational representations of some matrix functionals}

For symmetric norm $\normmm{\cdot},$ consider the function
\begin{eqnarray*}
\tilde{\Psi}_{p,q,s}(A,B)=\normmm{\left(B^{\frac{q}{2}}K^*A^pKB^{\frac{q}{2}}\right)^s}
\end{eqnarray*}
for $A, B\in \mathcal{P}_n, K\in \mathcal{M}_n$ and $p,q,s\in \mathbb{R}.$
We have the following variational representations:
\begin{theorem}
$(i)$ Let $s,p,q>0;$ or $s>0,$ $p,q<0.$ Then
\begin{eqnarray}
&&
\tilde{\Psi}_{p,q,s}(A,B)\nonumber\\
&=&\left\{
\begin{aligned}
&\min_{Z>0}\left\{\normmm{(Z^{-\frac{1}{2}}K^*A^pKZ^{-\frac{1}{2}})^{\frac{s(p+q)}{p}}}^{\frac{p}{p+q}}\cdot
\normmm{(Z^{\frac{1}{2}}B^qZ^{\frac{1}{2}})^{\frac{s(p+q)}{q}}}^{\frac{q}{p+q}}\right\}\\[1.5ex]
&\min_{Z>0}\left\{\frac{p}{p+q}\normmm{(Z^{-\frac{1}{2}}K^*A^pKZ^{-\frac{1}{2}})^{\frac{s(p+q)}{p}}}+
\frac{q}{p+q}\normmm{(Z^{\frac{1}{2}}B^qZ^{\frac{1}{2}})^{\frac{s(p+q)}{q}}}\right\}.
\end{aligned}
\right.
\end{eqnarray}

\noindent
$(ii)$ Let $s>0,p>0,q<0$ with $p+q>0;$ or $s>0,p<0,q>0$ with $p+q<0.$ Then
\begin{eqnarray}
&&
\tilde{\Psi}_{p,q,s}(A,B)\nonumber\\
&=&\left\{
\begin{aligned}
&\max_{Z>0}\left\{\normmm{(Z^{-\frac{1}{2}}K^*A^pKZ^{-\frac{1}{2}})^{\frac{s(p+q)}{p}}}^{\frac{p}{p+q}}\cdot
\normmm{ (Z^{\frac{1}{2}}B^qZ^{\frac{1}{2}})^{\frac{s(p+q)}{q}}}^{\frac{q}{p+q}}\right\}\\[1.5ex]
&\max_{Z>0}\left\{\frac{p}{p+q}\normmm{ (Z^{-\frac{1}{2}}K^*A^pKZ^{-\frac{1}{2}})^{\frac{s(p+q)}{p}}}+
\frac{q}{p+q}\normmm{(Z^{\frac{1}{2}}B^qZ^{\frac{1}{2}})^{\frac{s(p+q)}{q}}}\right\}.
\end{aligned}
\right.
\end{eqnarray}
\end{theorem}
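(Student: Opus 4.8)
The plan is to realize $\tilde{\Psi}_{p,q,s}(A,B)$ as the left-hand side of the (reverse) Hölder inequality of Section 2 and then to identify the optimizer explicitly as an operator geometric mean. Write $M=K^*A^pK$ and $N=B^{q/2}MB^{q/2}$, so that $\tilde{\Psi}_{p,q,s}(A,B)=\normmm{N^s}$. For any $Z>0$, factor $A^{p/2}KB^{q/2}=PQ$ with $P=A^{p/2}KZ^{-1/2}$ and $Q=Z^{1/2}B^{q/2}$; a direct computation gives $(PQ)^*(PQ)=Q^*P^*PQ=N$, hence $\normmm{|PQ|^{2s}}=\normmm{N^s}=\tilde{\Psi}_{p,q,s}(A,B)$. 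First I would apply the Hölder inequality to $\normmm{|PQ|^{2s}}$ with the triple of exponents $r=2s$, $p_0=2s(p+q)/p$, $q_0=2s(p+q)/q$, which satisfy $\tfrac1r=\tfrac1{p_0}+\tfrac1{q_0}$. Using $|P|^2=Z^{-1/2}MZ^{-1/2}$ together with $|Q^*|^2=QQ^*=Z^{1/2}B^qZ^{1/2}$ and the equality $\normmm{|Q|^{q_0}}=\normmm{|Q^*|^{q_0}}$ of singular-value norms, the two Hölder factors become exactly the two norm terms of the statement, with outer exponents $2s/p_0=p/(p+q)$ and $2s/q_0=q/(p+q)$. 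This yields the one-sided bound between $\tilde{\Psi}_{p,q,s}(A,B)$ and the product form for \emph{every} $Z>0$.

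The additive (sum) form is then obtained from the multiplicative (product) form for free, by applying the scalar weighted arithmetic–geometric mean inequality $u^{\theta}v^{1-\theta}\le \theta u+(1-\theta)v$ (respectively $\ge$, in the reverse regime) with $\theta=p/(p+q)$ and $1-\theta=q/(p+q)$ to the two nonnegative numbers $u=\normmm{(Z^{-1/2}MZ^{-1/2})^{s(p+q)/p}}$ and $v=\normmm{(Z^{1/2}B^qZ^{1/2})^{s(p+q)/q}}$. So once the product form is settled the sum form costs nothing extra.

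The crux is the attainment, and this is where the geometric mean enters. I would take
\begin{equation*}
Z_0=B^{-q}\,\sharp_{\frac{q}{p+q}}\,K^*A^pK=B^{-q/2}\bigl(B^{q/2}K^*A^pKB^{q/2}\bigr)^{\frac{q}{p+q}}B^{-q/2}=B^{-q/2}N^{\frac{q}{p+q}}B^{-q/2}.
\end{equation*}
For this choice one checks, purely by similarity of matrices, that $Z_0^{1/2}B^qZ_0^{1/2}$ has the same eigenvalues as $N^{q/(p+q)}$, while $Z_0^{-1/2}MZ_0^{-1/2}$ (which is similar to $MZ_0^{-1}$, and, after conjugation by $B^{q/2}$, to $N\cdot N^{-q/(p+q)}=N^{p/(p+q)}$) has the same eigenvalues as $N^{p/(p+q)}$. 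Raising these to the powers $s(p+q)/q$ and $s(p+q)/p$ respectively, both matrices inside the two norms become unitarily equivalent to $N^s$; hence each of the two norm terms equals $\normmm{N^s}=\tilde{\Psi}_{p,q,s}(A,B)$, and both the product and the sum, evaluated at $Z_0$, collapse to $\normmm{N^s}$. Combined with the one-sided bound of the first step, this pins down the extremum.

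Finally, the sign bookkeeping selects the four cases. Checking the signs of $p_0,q_0$ shows that in case $(i)$ (both subcases $p,q>0$ and $p,q<0$) one has $r,p_0,q_0>0$, so the ordinary Hölder inequality $(2.3)$ and the ordinary weighted AM–GM apply, giving $\tilde{\Psi}\le\text{(product)}\le\text{(sum)}$ and hence the \emph{minimum}; whereas in case $(ii)$ one always finds $p_0>0$ and $q_0<0$, with the (invertible) $B^q$-factor carrying the negative power, so the reverse Hölder inequality $(2.4)$ and the reverse Young inequality (the regime $\theta>1$) apply, reversing every inequality and producing the \emph{maximum}. I expect the only genuine obstacle to be the discovery and verification of $Z_0$; the verification itself is just a chain of similarity transformations, but one must also secure $Z_0>0$, which forces $K$ (hence $N$) to be invertible whenever the exponent $q/(p+q)$ is negative, with the general case recovered by a continuity/approximation argument.
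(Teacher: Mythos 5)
Your proposal is correct and follows essentially the same route as the paper's proof: the same factorization $S=A^{p/2}KZ^{-1/2}$, $T=Z^{1/2}B^{q/2}$, the same exponents $r_0=2s$, $r_1=2s(p+q)/p$, $r_2=2s(p+q)/q$ fed into the (reverse) H\"{o}lder and Young inequalities of Section 2, and the same optimizer $Z_0=B^{-q}\sharp_{q/(p+q)}(K^*A^pK)$ verified by similarity. Your two extra touches --- making explicit the step $\normmm{\,|Q|^{r_2}}=\normmm{\,|Q^*|^{r_2}}$ and flagging that $Z_0>0$ requires $K$ invertible in the reverse regime (with an approximation argument otherwise) --- are points the paper silently glosses over, so they only add rigor.
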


\begin{proof}
From H\"{o}lder inequality and Young inequality and their reverse versions, we have for $S, T\in \mathcal{M}_n,$
\begin{eqnarray}
\normmm{|ST|^{r_0}} &\le& \normmm{|S|^{r_1}}^{\frac{r_0}{r_1}}\normmm{|T|^{r_2}}^{\frac{r_0}{r_2}}
\\&\le& \frac{r_0}{r_1}\normmm{|S|^{r_1}}+ \frac{r_0}{r_2}\normmm{|T|^{r_2}} \qquad (r_0,r_1,r_2>0, \frac{1}{r_0}=\frac{1}{r_1}+\frac{1}{r_2});
\end{eqnarray}
and for $S, T\in \mathcal{M}_n$ with $T$ invertible,
\begin{eqnarray}
\normmm{|ST|^{r_0}}&\ge& \normmm{|S|^{r_1}}^{\frac{r_0}{r_1}} \normmm{|T|^{r_2}}^{\frac{r_0}{r_2}}
\\&\ge& \frac{r_0}{r_1}\normmm{|S|^{r_1}}+ \frac{r_0}{r_2}\normmm{|T|^{r_2}} \qquad (r_0,r_1>0,r_2<0, \frac{1}{r_0}=\frac{1}{r_1}+\frac{1}{r_2}).
\end{eqnarray}

Under the conditions of $(i),$ set $$r_0=2s,\quad r_1=\frac{2s(p+q)}{p},\quad r_2=\frac{2s(p+q)}{q},$$ then the conditions $r_0,r_1,r_2>0$ and $\frac{1}{r_0}=\frac{1}{r_1}+\frac{1}{r_2}$ hold. Then by setting $S=A^{\frac{p}{2}}KZ^{-\frac{1}{2}}$ and $T=Z^{\frac{1}{2}}B^{\frac{q}{2}}$ in inequality $(3.3)$ and $(3.4)$, we have
\begin{eqnarray*}
\tilde{\Psi}_{p,q,s}(A,B)
&=&\normmm{(B^{\frac{q}{2}}K^*A^pKB^{\frac{q}{2}})^s}\\
&=&\normmm{(B^{\frac{q}{2}}Z^{\frac{1}{2}}Z^{-\frac{1}{2}}K^*A^{\frac{p}{2}}A^{\frac{p}{2}}KZ^{-\frac{1}{2}}Z^{\frac{1}{2}}B^{\frac{q}{2}})^s}\\
&=&\normmm{|A^{\frac{p}{2}}KZ^{-\frac{1}{2}}\cdot Z^{\frac{1}{2}}B^{\frac{q}{2}}|^{r_0}}\\
&\le&\normmm{|A^{\frac{p}{2}}KZ^{-\frac{1}{2}}|^{r_1}}^{\frac{r_0}{r_1}} \cdot \normmm{|Z^{\frac{1}{2}}B^{\frac{q}{2}}|^{r_2}}^{\frac{r_0}{r_2}}\\
&=& \normmm{(Z^{-\frac{1}{2}}K^*A^pKZ^{-\frac{1}{2}})^{\frac{s(p+q)}{p}}}^{\frac{p}{p+q}}\cdot
\normmm{(Z^{\frac{1}{2}}B^qZ^{\frac{1}{2}})^{\frac{s(p+q)}{q}}}^{\frac{q}{p+q}}\\
&\le& \frac{p}{p+q}\normmm{(Z^{-\frac{1}{2}}K^*A^pKZ^{-\frac{1}{2}})^{\frac{s(p+q)}{p}}}+
\frac{q}{p+q}\normmm{(Z^{\frac{1}{2}}B^qZ^{\frac{1}{2}})^{\frac{s(p+q)}{q}}}.
\end{eqnarray*}
When
\begin{eqnarray*}
Z=B^{-q}\sharp_{\frac{q}{p+q}}(K^*A^pK),
\end{eqnarray*}
we have
\begin{eqnarray*}
\normmm{ (Z^{-\frac{1}{2}}K^*A^pKZ^{-\frac{1}{2}})^{\frac{s(p+q)}{p}}}
&=&\normmm{ ((K^*A^pK)^{\frac{1}{2}}Z^{-1}(K^*A^pK)^{\frac{1}{2}})^{\frac{s(p+q)}{p}}}\\
&=&\normmm{ ((K^*A^pK)^{\frac{1}{2}}((K^*A^pK)^{-1}\sharp_{\frac{p}{p+q}}B^q)(K^*A^pK)^{\frac{1}{2}})^{\frac{s(p+q)}{p}}}\\
&=&\normmm{ ((K^*A^pK)^{\frac{1}{2}}B^q(K^*A^pK)^{\frac{1}{2}})^{\frac{p}{p+q}\frac{s(p+q)}{p}}}\\
&=&\normmm{(B^{\frac{q}{2}}K^*A^pKB^{\frac{q}{2}})^s};
\end{eqnarray*}
and
\begin{eqnarray*}
\normmm{(Z^{\frac{1}{2}}B^qZ^{\frac{1}{2}})^{\frac{s(p+q)}{q}}}
&=&\normmm{ (B^{\frac{q}{2}}ZB^{\frac{q}{2}})^{\frac{s(p+q)}{q}}}\\
&=&\normmm{ (B^{\frac{q}{2}}K^*A^pKB^{\frac{q}{2}})^{\frac{q}{p+q}\frac{s(p+q)}{q}}}\\
&=&\normmm{ (B^{\frac{q}{2}}K^*A^pKB^{\frac{q}{2}})^s}.
\end{eqnarray*}
Then it follows that
\begin{eqnarray*}
&&
\tilde{\Psi}_{p,q,s}(A,B)=\normmm{(B^{\frac{q}{2}}K^*A^pKB^{\frac{q}{2}})^s}\\
&=&\left\{
\begin{aligned}
&\min_{Z>0}\left\{\normmm{(Z^{-\frac{1}{2}}K^*A^pKZ^{-\frac{1}{2}})^{\frac{s(p+q)}{p}}}^{\frac{p}{p+q}}\cdot
\normmm{(Z^{\frac{1}{2}}B^qZ^{\frac{1}{2}})^{\frac{s(p+q)}{q}}}^{\frac{q}{p+q}}\right\}\\[1.5ex]
&\min_{Z>0}\left\{\frac{p}{p+q}\normmm{ (Z^{-\frac{1}{2}}K^*A^pKZ^{-\frac{1}{2}})^{\frac{s(p+q)}{p}}}+
\frac{q}{p+q}\normmm{(Z^{\frac{1}{2}}B^qZ^{\frac{1}{2}})^{\frac{s(p+q)}{q}}}\right\}.
\end{aligned}
\right.
\end{eqnarray*}

Under the conditions of $(ii),$ set $$r_0=2s,\quad  r_1=\frac{2s(p+q)}{p},\quad r_2=\frac{2s(p+q)}{q}.$$  Then we have $r_0>0,r_1>0,r_2<0$ and $\frac{1}{r_0}=\frac{1}{r_1}+\frac{1}{r_2}.$ Now set $S=A^{\frac{p}{2}}KZ^{-\frac{1}{2}}$ and $T=Z^{\frac{1}{2}}B^{\frac{q}{2}}$ in inequalities $(3.5)$ and $(3.6).$ Following a similar argument as above, we can obtain
\begin{eqnarray*}
&&
\tilde{\Psi}_{p,q,s}(A,B)=\normmm{(B^{\frac{q}{2}}K^*A^pKB^{\frac{q}{2}})^s}\\
&=&\left\{
\begin{aligned}
&\max_{Z>0}\left\{\normmm{ (Z^{-\frac{1}{2}}K^*A^pKZ^{-\frac{1}{2}})^{\frac{s(p+q)}{p}}}^{\frac{p}{p+q}}\cdot
\normmm{ (Z^{\frac{1}{2}}B^qZ^{\frac{1}{2}})^{\frac{s(p+q)}{q}}}^{\frac{q}{p+q}}\right\}\\[1.5ex]
&\max_{Z>0}\left\{\frac{p}{p+q}\normmm{(Z^{-\frac{1}{2}}K^*A^pKZ^{-\frac{1}{2}})^{\frac{s(p+q)}{p}}}+
\frac{q}{p+q}\normmm{(Z^{\frac{1}{2}}B^qZ^{\frac{1}{2}})^{\frac{s(p+q)}{q}}}\right\}.
\end{aligned}
\right.
\end{eqnarray*}
\end{proof}

\begin{theorem} $(i)$ Let $s,q>0$ with $s<1/q.$  Then
\begin{eqnarray}
&&
\tilde{\Psi}_{p,q,s}(A,B)\nonumber\\
&=&\left\{
\begin{aligned}
&\min_{Z>0}\left\{\normmm{(Z^{-\frac{1}{2}}K^*A^pKZ^{-\frac{1}{2}})^{\frac{s}{1-sq}}}^{1-sq}\cdot
\normmm{ (Z^{\frac{1}{2}}B^qZ^{\frac{1}{2}})^{\frac{1}{q}}}^{sq}\right\}\\[1.5ex]
&\min_{Z>0}\left\{(1-sq)\normmm{(Z^{-\frac{1}{2}}K^*A^pKZ^{-\frac{1}{2}})^{\frac{s}{1-sq}}}+
sq \normmm{(Z^{\frac{1}{2}}B^qZ^{\frac{1}{2}})^{\frac{1}{q}}}\right\}.
\end{aligned}
\right.
\end{eqnarray}

\noindent
$(ii)$ Let $s>0,q<0.$ Then
\begin{eqnarray}
&&
\tilde{\Psi}_{p,q,s}(A,B)\nonumber\\
&=&\left\{
\begin{aligned}
&\max_{Z>0}\left\{\normmm{ (Z^{-\frac{1}{2}}K^*A^pKZ^{-\frac{1}{2}})^{\frac{s}{1-sq}}}^{1-sq}\cdot
\normmm{(Z^{\frac{1}{2}}B^qZ^{\frac{1}{2}})^{\frac{1}{q}}}^{sq}\right\}\\[1.5ex]
&\max_{Z>0}\left\{(1-sq)\normmm{(Z^{-\frac{1}{2}}K^*A^pKZ^{-\frac{1}{2}})^{\frac{s}{1-sq}}}+
sq \normmm{ (Z^{\frac{1}{2}}B^qZ^{\frac{1}{2}})^{\frac{1}{q}}}\right\}.
\end{aligned}
\right.
\end{eqnarray}

\end{theorem}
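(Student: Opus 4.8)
The plan is to mirror the proof of Theorem 3.1: apply the (reverse) H\"older and Young inequalities $(3.3)$--$(3.6)$ to the factorization $B^{q/2}K^*A^pKB^{q/2}=(A^{p/2}KZ^{-1/2}\cdot Z^{1/2}B^{q/2})^*(A^{p/2}KZ^{-1/2}\cdot Z^{1/2}B^{q/2})$ to obtain bounds valid for every $Z>0$, and then exhibit a critical $Z$ at which equality holds. The only genuinely new ingredients relative to Theorem 3.1 are the choice of exponents and the location of the critical point.

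First I would set
\begin{eqnarray*}
r_0=2s,\qquad r_1=\frac{2s}{1-sq},\qquad r_2=\frac{2}{q},
\end{eqnarray*}
and verify $\frac{1}{r_0}=\frac{1}{r_1}+\frac{1}{r_2}$, which reduces to $\frac{1}{2s}=\frac{1-sq}{2s}+\frac{q}{2}$ and holds identically. Under the hypotheses of $(i)$ (namely $s,q>0$ and $sq<1$) all three exponents are positive, so $(3.3)$--$(3.4)$ apply; under the hypotheses of $(ii)$ (namely $s>0$, $q<0$) one has $r_0,r_1>0$ but $r_2<0$ (note that $1-sq>1>0$ keeps $r_1>0$), so the reverse inequalities $(3.5)$--$(3.6)$ apply.

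Then, taking $S=A^{p/2}KZ^{-1/2}$ and $T=Z^{1/2}B^{q/2}$ so that $|ST|^{r_0}$ has the same singular values as $(B^{q/2}K^*A^pKB^{q/2})^s$, and using $r_0/r_1=1-sq$, $r_0/r_2=sq$ together with $\normmm{|T|^{r_2}}=\normmm{|T^*|^{r_2}}$ to rewrite the $T$-factor as a power of $Z^{1/2}B^qZ^{1/2}$, inequalities $(3.3)$--$(3.4)$ yield, for every $Z>0$ in case $(i)$, the upper bound
\begin{eqnarray*}
\tilde{\Psi}_{p,q,s}(A,B)\le \normmm{(Z^{-\frac12}K^*A^pKZ^{-\frac12})^{\frac{s}{1-sq}}}^{1-sq}\normmm{(Z^{\frac12}B^qZ^{\frac12})^{\frac1q}}^{sq},
\end{eqnarray*}
followed by its Young refinement with the convex-combination coefficients $1-sq$ and $sq$ (and the correspondingly reversed chain in case $(ii)$, coming from $(3.5)$--$(3.6)$).

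Finally I would check that these bounds are saturated at
\begin{eqnarray*}
Z_0=B^{-q}\sharp_{sq}(K^*A^pK)=B^{-\frac{q}{2}}\bigl(B^{\frac{q}{2}}K^*A^pKB^{\frac{q}{2}}\bigr)^{sq}B^{-\frac{q}{2}}.
\end{eqnarray*}
Using $(X\sharp_\alpha Y)^{-1}=X^{-1}\sharp_\alpha Y^{-1}$ and $X\sharp_\alpha Y=Y\sharp_{1-\alpha}X$ gives $Z_0^{-1}=(K^*A^pK)^{-1}\sharp_{1-sq}B^q$, so after raising to the power $\frac{s}{1-sq}$ the $A$-factor collapses to $\normmm{((K^*A^pK)^{1/2}B^q(K^*A^pK)^{1/2})^s}$; meanwhile $Z_0^{1/2}B^qZ_0^{1/2}$ has the same eigenvalues as $B^{q/2}Z_0B^{q/2}=(B^{q/2}K^*A^pKB^{q/2})^{sq}$, so the $B$-factor collapses to the same quantity. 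Since $(K^*A^pK)^{1/2}B^q(K^*A^pK)^{1/2}$ and $B^{q/2}K^*A^pKB^{q/2}$ share the same nonzero eigenvalues, both norm factors equal $\tilde{\Psi}_{p,q,s}(A,B)$ exactly at $Z_0$, and the minimum (resp.\ maximum) representations follow. I expect the main obstacle to be purely the geometric-mean bookkeeping---confirming that the single point $Z_0$ forces \emph{both} norm factors to reduce to $\tilde{\Psi}_{p,q,s}(A,B)$ simultaneously---since the inequalities and the sign checks become immediate once the exponents $r_0,r_1,r_2$ are fixed.
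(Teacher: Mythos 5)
Your proposal is correct and follows essentially the same route as the paper: the identical exponent choices $r_0=2s$, $r_1=\tfrac{2s}{1-sq}$, $r_2=\tfrac{2}{q}$, the same factors $S=A^{p/2}KZ^{-1/2}$, $T=Z^{1/2}B^{q/2}$ plugged into the (reverse) H\"older--Young inequalities $(3.3)$--$(3.6)$, and the same critical point $Z_0=B^{-q}\sharp_{sq}(K^*A^pK)$ at which both norm factors collapse to $\tilde{\Psi}_{p,q,s}(A,B)$. Your explicit geometric-mean bookkeeping (via $Z_0^{-1}=(K^*A^pK)^{-1}\sharp_{1-sq}B^q$ and the eigenvalue identification of $Z_0^{1/2}B^qZ_0^{1/2}$ with $(B^{q/2}K^*A^pKB^{q/2})^{sq}$) is exactly the computation the paper carries out, so no gap remains.
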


\begin{proof} Under the conditions of $(i),$ set $r_0=2s, r_1=\frac{2s}{1-sq}, r_2=\frac{2}{q},$ then $r_0,r_1,r_2>0$ and $\frac{1}{r_0}=\frac{1}{r_1}+\frac{1}{r_2}$ hold. Now
set $S=A^{\frac{p}{2}}KZ^{-\frac{1}{2}}$ and $T=Z^{\frac{1}{2}}B^{\frac{q}{2}}$ in inequality $(3.3)$ and $(3.4)$, then we have
\begin{eqnarray*}
\tilde{\Psi}_{p,q,s}(A,B)
&=&\normmm{(B^{\frac{q}{2}}K^*A^pKB^{\frac{q}{2}})^s}\\
&=&\normmm{(B^{\frac{q}{2}}Z^{\frac{1}{2}}Z^{-\frac{1}{2}}K^*A^{\frac{p}{2}}A^{\frac{p}{2}}KZ^{-\frac{1}{2}}Z^{\frac{1}{2}}B^{\frac{q}{2}})^s}\\
&\le&\normmm{\;|A^{\frac{p}{2}}KZ^{-\frac{1}{2}}|^{r_1}}^{\frac{r_0}{r_1}} \cdot \normmm{\;|Z^{\frac{1}{2}}B^{\frac{q}{2}}|^{r_2}}^{\frac{r_0}{r_2}}\\
&=& \normmm{(Z^{-\frac{1}{2}}K^*A^pKZ^{-\frac{1}{2}})^{\frac{s}{1-sq}}}^{1-sq}\cdot
\normmm{(Z^{\frac{1}{2}}B^qZ^{\frac{1}{2}})^{\frac{1}{q}}}^{sq}\\
&\le& (1-sq)\normmm{(Z^{-\frac{1}{2}}K^*A^pKZ^{-\frac{1}{2}})^{\frac{s}{1-sq}}}+
sq \normmm{(Z^{\frac{1}{2}}B^qZ^{\frac{1}{2}})^{\frac{1}{q}}}.
\end{eqnarray*}
When
\begin{eqnarray*}
Z=B^{-q}\sharp_{sq}(K^*A^pK),
\end{eqnarray*}
we have
\begin{eqnarray*}
\normmm{(Z^{-\frac{1}{2}}K^*A^pKZ^{-\frac{1}{2}})^{\frac{s}{1-sq}}}
=\normmm{(B^{\frac{q}{2}}K^*A^pKB^{\frac{q}{2}})^s};
\end{eqnarray*}
and
\begin{eqnarray*}
\normmm{(Z^{\frac{1}{2}}B^qZ^{\frac{1}{2}})^{\frac{1}{q}}}
=\normmm{(B^{\frac{q}{2}}K^*A^pKB^{\frac{q}{2}})^s}.
\end{eqnarray*}
Hence it follows that
\begin{eqnarray*}
&&
\tilde{\Psi}_{p,q,s}(A,B)=\normmm{\left(B^{\frac{q}{2}}K^*A^pKB^{\frac{q}{2}}\right)^s}\\
&=&\left\{
\begin{aligned}
&\min_{Z>0}\left\{\normmm{ (Z^{-\frac{1}{2}}K^*A^pKZ^{-\frac{1}{2}})^{\frac{s}{1-sq}}}^{1-sq}\cdot
\normmm{(Z^{\frac{1}{2}}B^qZ^{\frac{1}{2}})^{\frac{1}{q}}}^{sq}\right\}\\[1.5ex]
&\min_{Z>0}\left\{(1-sq)\normmm{(Z^{-\frac{1}{2}}K^*A^pKZ^{-\frac{1}{2}})^{\frac{s}{1-sq}}}+
sq \normmm{(Z^{\frac{1}{2}}B^qZ^{\frac{1}{2}})^{\frac{1}{q}}}\right\}.
\end{aligned}
\right.
\end{eqnarray*}

Similarly, under the conditions of $(ii)$ and by using inequalities $(3.5)$ and $(3.6),$ we can get
\begin{eqnarray*}
&&
\tilde{\Psi}_{p,q,s}(A,B)=\normmm{\left(B^{\frac{q}{2}}K^*A^pKB^{\frac{q}{2}}\right)^s}\\
&=&\left\{
\begin{aligned}
&\max_{Z>0}\left\{\normmm{ (Z^{-\frac{1}{2}}K^*A^pKZ^{-\frac{1}{2}})^{\frac{s}{1-sq}}}^{1-sq}\cdot
\normmm{(Z^{\frac{1}{2}}B^qZ^{\frac{1}{2}})^{\frac{1}{q}}}^{sq}\right\}\\[1.5ex]
&\max_{Z>0}\left\{(1-sq)\normmm{(Z^{-\frac{1}{2}}K^*A^pKZ^{-\frac{1}{2}})^{\frac{s}{1-sq}}}+
sq\normmm{(Z^{\frac{1}{2}}B^qZ^{\frac{1}{2}})^{\frac{1}{q}}}\right\}.
\end{aligned}
\right.
\end{eqnarray*}
\end{proof}

Consider the trace function
\begin{eqnarray*}
\Psi_{p,q,s}(A,B)=\tr \left(B^{\frac{q}{2}}K^*A^pKB^{\frac{q}{2}}\right)^s,
\end{eqnarray*}
for $A, B\in \mathcal{P}_n, K\in \mathcal{M}_n$ and $p,q,s\in \mathbb{R}.$
We have the following variational representations:
\begin{corollary}
$(i)$ Let $s,p,q>0;$ or $s>0,$ $p,q<0.$ Then
\begin{eqnarray}
&&
\Psi_{p,q,s}(A,B)\nonumber\\
&=&\left\{
\begin{aligned}
&\min_{Z>0}\left\{\left(\tr (Z^{-\frac{1}{2}}K^*A^pKZ^{-\frac{1}{2}})^{\frac{s(p+q)}{p}}\right)^{\frac{p}{p+q}}\cdot
\left(\tr (Z^{\frac{1}{2}}B^qZ^{\frac{1}{2}})^{\frac{s(p+q)}{q}}\right)^{\frac{q}{p+q}}\right\}\\[1.5ex]
&\min_{Z>0}\left\{\frac{p}{p+q}\tr (Z^{-\frac{1}{2}}K^*A^pKZ^{-\frac{1}{2}})^{\frac{s(p+q)}{p}}+
\frac{q}{p+q}\tr (Z^{\frac{1}{2}}B^qZ^{\frac{1}{2}})^{\frac{s(p+q)}{q}}\right\}.
\end{aligned}
\right.
\end{eqnarray}

\noindent
$(ii)$ Let $s>0,p>0,q<0$ with $p+q>0;$ or $s>0,p<0,q>0$ with $p+q<0.$ Then
\begin{eqnarray}
&&
\Psi_{p,q,s}(A,B)\nonumber\\
&=&\left\{
\begin{aligned}
&\max_{Z>0}\left\{\left(\tr (Z^{-\frac{1}{2}}K^*A^pKZ^{-\frac{1}{2}})^{\frac{s(p+q)}{p}}\right)^{\frac{p}{p+q}}\cdot
\left(\tr (Z^{\frac{1}{2}}B^qZ^{\frac{1}{2}})^{\frac{s(p+q)}{q}}\right)^{\frac{q}{p+q}}\right\}\\[1.5ex]
&\max_{Z>0}\left\{\frac{p}{p+q}\tr (Z^{-\frac{1}{2}}K^*A^pKZ^{-\frac{1}{2}})^{\frac{s(p+q)}{p}}+
\frac{q}{p+q}\tr (Z^{\frac{1}{2}}B^qZ^{\frac{1}{2}})^{\frac{s(p+q)}{q}}\right\}.
\end{aligned}
\right.
\end{eqnarray}
\end{corollary}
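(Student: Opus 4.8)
The plan is to obtain Corollary 3.3 as the specialization of Theorem 3.1 to the trace norm (Schatten $1$-norm) $\|\cdot\|_1$. The guiding observation is that for every positive semidefinite matrix $Y$ one has $\|Y\|_1=\tr Y$. Consequently, choosing $\normmm{\cdot}=\|\cdot\|_1$ turns $\tilde{\Psi}_{p,q,s}$ into $\Psi_{p,q,s}$ and converts each symmetric-norm expression in (3.1) and (3.2) into the corresponding trace expression in (3.9) and (3.10).

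First I would verify that, under the hypotheses of parts (i) and (ii), every matrix to which a norm is applied is positive semidefinite. The base $B^{q/2}K^*A^pKB^{q/2}$ is PSD, being a congruence of $A^p\ge 0$; since $s>0$ throughout, $(B^{q/2}K^*A^pKB^{q/2})^s$ is PSD as well. Likewise $Z^{-1/2}K^*A^pKZ^{-1/2}$ is PSD and $Z^{1/2}B^qZ^{1/2}$ is positive definite, because $B\in\mathcal{P}_n$ and $Z>0$. A PSD matrix raised to any positive power stays PSD, and a positive definite matrix raised to any real power stays positive definite; hence the exponents $\tfrac{s(p+q)}{p}$ and $\tfrac{s(p+q)}{q}$—the latter being negative in the regimes of part (ii)—cause no difficulty, and $\|\cdot\|_1=\tr(\cdot)$ applies to every factor.

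With positivity established, I would substitute $\normmm{\cdot}=\|\cdot\|_1$ directly into the two identities of Theorem 3.1. On the left this yields $\tilde{\Psi}_{p,q,s}(A,B)=\tr(B^{q/2}K^*A^pKB^{q/2})^s=\Psi_{p,q,s}(A,B)$, while on the right each factor $\normmm{(\cdot)^t}^{\,c}$ becomes $(\tr(\cdot)^t)^{c}$ and each summand $c\,\normmm{(\cdot)^t}$ becomes $c\,\tr(\cdot)^t$. This reproduces the minima of (3.9) and the maxima of (3.10) verbatim; moreover the extremizer $Z=B^{-q}\sharp_{q/(p+q)}(K^*A^pK)$ found in the proof of Theorem 3.1 is independent of the chosen symmetric norm, so the extrema are genuinely attained.

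Since this is a direct specialization, I expect essentially no obstacle beyond bookkeeping. The only point needing care is the positivity check above, namely confirming that the trace norm reduces to the trace on precisely the matrices that appear in each of the two parameter regimes; once this is in place, Corollary 3.3 follows at once from Theorem 3.1.
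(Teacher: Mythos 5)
Your proposal is correct and matches the paper's intent exactly: the paper presents Corollary 3.3 as an immediate specialization of Theorem 3.1, obtained by taking the symmetric norm $\normmm{\cdot}$ to be the trace norm $\|\cdot\|_1$, which agrees with $\tr(\cdot)$ on the positive semidefinite matrices appearing in both parameter regimes. Your positivity check and the observation that the extremizer $Z=B^{-q}\sharp_{q/(p+q)}(K^*A^pK)$ is norm-independent are exactly the bookkeeping the paper leaves implicit.
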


\begin{corollary} $(i)$ Let $s,q>0$ with $s<1/q.$  Then
\begin{eqnarray}
&&
\Psi_{p,q,s}(A,B)\nonumber\\
&=&\left\{
\begin{aligned}
&\min_{Z>0}\left\{\left(\tr (Z^{-\frac{1}{2}}K^*A^pKZ^{-\frac{1}{2}})^{\frac{s}{1-sq}}\right)^{1-sq}\cdot
\left(\tr (Z^{\frac{1}{2}}B^qZ^{\frac{1}{2}})^{\frac{1}{q}}\right)^{sq}\right\}\\[1.5ex]
&\min_{Z>0}\left\{(1-sq)\tr (Z^{-\frac{1}{2}}K^*A^pKZ^{-\frac{1}{2}})^{\frac{s}{1-sq}}+
sq \tr (Z^{\frac{1}{2}}B^qZ^{\frac{1}{2}})^{\frac{1}{q}}\right\}.
\end{aligned}
\right.
\end{eqnarray}

\noindent
$(ii)$ Let $s>0,q<0.$ Then
\begin{eqnarray}
&&
\Psi_{p,q,s}(A,B)\nonumber\\
&=&\left\{
\begin{aligned}
&\max_{Z>0}\left\{\left(\tr (Z^{-\frac{1}{2}}K^*A^pKZ^{-\frac{1}{2}})^{\frac{s}{1-sq}}\right)^{1-sq}\cdot
\left(\tr (Z^{\frac{1}{2}}B^qZ^{\frac{1}{2}})^{\frac{1}{q}}\right)^{sq}\right\}\\[1.5ex]
&\max_{Z>0}\left\{(1-sq)\tr (Z^{-\frac{1}{2}}K^*A^pKZ^{-\frac{1}{2}})^{\frac{s}{1-sq}}+
sq \tr (Z^{\frac{1}{2}}B^qZ^{\frac{1}{2}})^{\frac{1}{q}}\right\}.
\end{aligned}
\right.
\end{eqnarray}

\end{corollary}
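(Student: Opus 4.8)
The plan is to deduce Corollary 3.5 directly from Theorem 3.3 by specializing the generic symmetric norm $\normmm{\cdot}$ to the Schatten trace norm $\|\cdot\|_1$. Recall that $\|\cdot\|_1$ is precisely the symmetric norm generated by the symmetric gauge function $\Phi(x)=\sum_{i=1}^{n}|x_i|$, which obviously satisfies axioms (i)--(iv); hence $\normmm{A}=\Phi(s(A))=\sum_i s_i(A)=\|A\|_1$ is a legitimate symmetric norm and every conclusion of Theorem 3.3 holds for this choice. The only additional ingredient is the elementary identity $\|X\|_1=\tr X$ valid for every positive semidefinite $X$, which follows because in that case $|X|=X$ and $\|X\|_1=\tr|X|$.

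First I would verify that each matrix sitting inside a symmetric norm in Theorem 3.3 is positive, so that the trace-norm identity may be applied factor by factor. The three arguments are $B^{\frac q2}K^*A^pKB^{\frac q2}=(A^{\frac p2}KB^{\frac q2})^*(A^{\frac p2}KB^{\frac q2})$, the analogous $Z^{-\frac12}K^*A^pKZ^{-\frac12}=(A^{\frac p2}KZ^{-\frac12})^*(A^{\frac p2}KZ^{-\frac12})$, and $Z^{\frac12}B^qZ^{\frac12}$; for $A,B,Z\in\mathcal{P}_n$ all three are positive definite. Under the hypotheses of part (i), namely $s,q>0$ with $s<1/q$, the exponents $s$, $\frac{s}{1-sq}$ and $\frac1q$ are all positive (since $1-sq>0$), while under the hypotheses of part (ii), $s>0$ and $q<0$, the exponents $s$ and $\frac{s}{1-sq}$ remain positive and $\frac1q<0$. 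In every case a real power of a positive definite matrix is again positive definite, so the trace norm of that power equals its trace.

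With positivity established, substituting $\normmm{\cdot}=\|\cdot\|_1$ into the representations of Theorem 3.3 converts $\tilde{\Psi}_{p,q,s}(A,B)=\normmm{(B^{\frac q2}K^*A^pKB^{\frac q2})^s}$ into $\tr(B^{\frac q2}K^*A^pKB^{\frac q2})^s=\Psi_{p,q,s}(A,B)$, and likewise turns each factor $\normmm{(Z^{-\frac12}K^*A^pKZ^{-\frac12})^{\frac{s}{1-sq}}}$ and $\normmm{(Z^{\frac12}B^qZ^{\frac12})^{\frac1q}}$ into the corresponding trace. This reproduces formulas $(3.11)$ and $(3.12)$ term for term, with the optimizer inherited unchanged from Theorem 3.3 as $Z=B^{-q}\sharp_{sq}(K^*A^pK)$.

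Since Theorem 3.3 already supplies both the inequality directions (the H\"older and Young inequalities together with their reverse versions, i.e. $(3.3)$--$(3.6)$) and the attainment of the extremum at this specific $Z$, no genuinely new analysis remains. The only point deserving attention---hardly an obstacle---is the sign bookkeeping of the previous paragraph: one must confirm that for $q<0$ the negative power $(Z^{\frac12}B^qZ^{\frac12})^{1/q}$ is still a positive (invertible) matrix, so that its trace norm and its trace coincide. This is exactly where the standing assumption $A,B\in\mathcal{P}_n$, with $B$ and $Z$ invertible, is used, and it presents no difficulty.
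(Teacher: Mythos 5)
Your proposal is correct and is exactly the paper's (implicit) derivation: the paper states this corollary immediately after the symmetric-norm theorem, the intended argument being precisely the specialization $\normmm{\cdot}=\|\cdot\|_1$ together with the identity $\tr X=\|X\|_1$ for positive semidefinite $X$. One trivial overstatement: when $K$ is singular, $K^*A^pK$ and its conjugations are only positive \emph{semi}definite, but this is harmless since those factors carry positive exponents and the trace-norm identity still applies, while the only negatively-powered factor $Z^{\frac12}B^qZ^{\frac12}$ is genuinely positive definite, just as you note.
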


Set $s=t, p=1, q=\frac{1-t}{t}$ and $K=I$ in Corollary 3.3 or Corollary 3.4, we can obtain:
\begin{corollary}
Let $A,B\in \mathcal{P}_n.$ If $0\le t\le 1,$ then
\begin{eqnarray*}
&&F_t(A,B)=\tr \left(B^{\frac{1-t}{2t}}AB^{\frac{1-t}{2t}}\right)^t\\
&=&\left\{
\begin{aligned}
&\min_{Z\in \mathcal{P}_n}\left\{\left(\tr Z^{-\frac{1}{2}}AZ^{-\frac{1}{2}}\right)^t\cdot \left(\tr (Z^{\frac{1}{2}}B^{\frac{1-t}{t}}Z^{\frac{1}{2}})^{\frac{t}{1-t}}\right)^{1-t}\right\}\\[1.5ex]
&\min_{Z\in \mathcal{P}_n}\left\{t\tr Z^{-\frac{1}{2}}AZ^{-\frac{1}{2}}+
(1-t) \tr (Z^{\frac{1}{2}}B^{\frac{1-t}{t}}Z^{\frac{1}{2}})^{\frac{t}{1-t}}\right\}.
\end{aligned}
\right.
\end{eqnarray*}
If $t\ge 1,$ then
\begin{eqnarray*}
&&F_t(A,B):=\tr \left(B^{\frac{1-t}{2t}}AB^{\frac{1-t}{2t}}\right)^t\\
&=&\left\{
\begin{aligned}
&\max_{Z\in \mathcal{P}_n}\left\{\left(\tr Z^{-\frac{1}{2}}AZ^{-\frac{1}{2}}\right)^t\cdot \left(\tr (Z^{\frac{1}{2}}B^{\frac{1-t}{t}}Z^{\frac{1}{2}})^{\frac{t}{1-t}}\right)^{1-t}\right\}\\[1.5ex]
&\max_{Z\in \mathcal{P}_n}\left\{t\tr Z^{-\frac{1}{2}}AZ^{-\frac{1}{2}}+
(1-t) \tr (Z^{\frac{1}{2}}B^{\frac{1-t}{t}}Z^{\frac{1}{2}})^{\frac{t}{1-t}}\right\}.
\end{aligned}
\right.
\end{eqnarray*}
\end{corollary}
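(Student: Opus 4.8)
The plan is to obtain Corollary 3.5 by direct specialization of Corollary 3.4 (equivalently Corollary 3.3), since $F_t$ is exactly $\Psi_{p,q,s}$ along a one-parameter slice of $(p,q,s)$. First I would record the substitution $s=t$, $p=1$, $q=\frac{1-t}{t}$, $K=I$, which turns the defining expression into
\begin{eqnarray*}
\Psi_{1,\frac{1-t}{t},t}(A,B)=\tr\left(B^{\frac{1-t}{2t}}AB^{\frac{1-t}{2t}}\right)^t=F_t(A,B),
\end{eqnarray*}
so that the two left-hand sides agree. Everything then reduces to checking that the hypotheses of the appropriate half of Corollary 3.4 hold on the relevant $t$-range and that the exponents collapse to the stated ones.

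For the regime $0<t<1$ I would invoke Corollary 3.4$(i)$, whose hypotheses are $s,q>0$ and $s<1/q$. Here $s=t>0$ and $q=\frac{1-t}{t}>0$ hold precisely because $0<t<1$, while $s<1/q$ reads $t<\frac{t}{1-t}$, i.e. $t>0$, so it is automatic. The exponents simplify as $sq=1-t$, $1-sq=t$, $\frac{s}{1-sq}=1$ and $\frac{1}{q}=\frac{t}{1-t}$; feeding these (with $K=I$, $p=1$) into both lines of Corollary 3.4$(i)$ reproduces the two min-representations in the statement, the first factor collapsing to the linear term $\tr Z^{-1/2}AZ^{-1/2}$ because its exponent is $1$. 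For $t>1$ the same substitution now gives $q=\frac{1-t}{t}<0$, so the hypotheses $s>0$, $q<0$ of Corollary 3.4$(ii)$ are met; the identical exponent computation turns the two lines of Corollary 3.4$(ii)$ into the claimed max-representations. Both the product form and the sum form come out together, since the chain of (reverse) H\"older and Young inequalities underlying Theorem 3.2 yields both simultaneously, and the common critical point $Z=B^{-q}\sharp_{sq}(K^*A^pK)$ specializes to $Z=B^{\frac{t-1}{t}}\sharp_{1-t}A$.

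The one genuinely non-routine point is the behaviour at the endpoints $t=0$ and $t=1$, which are included in the closed ranges of the statement but excluded by the strict inequalities of the corollaries: at $t=1$ one has $q=0$ and at $t=0$ one has $s=0$, where the substitution degenerates (the inner exponent $\frac{t}{1-t}$ blows up as $t\to1$, and $B^{(1-t)/t}$ loses meaning as $t\to0$). I expect to handle these by a continuity argument, checking that both sides of each identity extend continuously in $t$ and then passing to the limit from the open interval, rather than by direct substitution. This is the step that needs care; the interior cases are a mechanical verification once the exponent bookkeeping above is in place.
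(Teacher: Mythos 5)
Your proposal is correct and follows exactly the paper's route: the paper derives this corollary by precisely the substitution $s=t$, $p=1$, $q=\frac{1-t}{t}$, $K=I$ in Corollary 3.3 or Corollary 3.4, with the same exponent bookkeeping ($sq=1-t$, $\frac{s}{1-sq}=1$, $\frac{1}{q}=\frac{t}{1-t}$) that you carry out. Your attention to the degenerate endpoints $t=0$ and $t=1$ actually exceeds the paper, which includes them in the closed ranges without comment.
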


The variational expressions of $F_t(A,B)$ for $t\in (0,1)$ was obtained in \cite{FL13}. See also \cite{BFT17} and \cite{BJL18}.

\section{Generalizing Lieb's Concavity Theorem via variational method}

Now we consider the convexity or concavity of $\tilde{\Psi}_{p,q,s}(A,B)$ and $\Psi_{p,q,s}(A,B)$ by using the variational method. Before doing so, we recall some convexity/concavity theorems about the matrix function
\begin{eqnarray*}
\Upsilon_{p,s}(A)=\tr (K^*A^pK)^s.
\end{eqnarray*}
Here we only consider the case of $s>0$.
\begin{theorem} Fix $K\in \mathcal{M}_n.$ Then for $A\in \mathcal{P}_n$ we have
\begin{enumerate}
\item[(i)]  If  $ 0\le p\le 1 $ and $ 0<s\le 1/p, $  then $ \Upsilon_{p,s} $ is concave.
\item[(ii)] If  $ -1\le p\le 0 $ and $ s>0,  $ then $ \Upsilon_{p,s} $ is convex.
\item[(iii)] If  $ 1\le p\le 2 $  and $ s\ge 1/p, $  then $ \Upsilon_{p,s} $ is convex.
\end{enumerate}
\end{theorem}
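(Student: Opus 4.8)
The plan is to split each of the three cases according to whether the outer exponent $s$ and the inner power $p$ are \emph{aligned}, so that a routine composition of an operator convex/concave map with a monotone convex/concave trace functional already fixes the sign, or \emph{misaligned}, so that the two effects compete and one must invoke the deeper concavity theorems. First I would record two ingredients. (a) The map $t\mapsto t^p$ on $(0,\infty)$ is operator concave for $0\le p\le 1$ and operator convex for $-1\le p\le 0$ and for $1\le p\le 2$; since $X\mapsto K^*XK$ is a positive linear map, $A\mapsto K^*A^pK$ inherits operator concavity (resp. convexity) on these ranges. (b) The functional $\psi_s(X)=\tr X^s$ is monotone increasing on $\mathcal{P}_n$ for every $s>0$, and it is concave for $0<s\le 1$ and convex for $s\ge 1$, because $\tr f(X)$ is concave/convex precisely when $f$ is. The elementary principle I would then use is that a monotone increasing, convex trace functional composed with an operator convex matrix map is convex, and the same with ``concave'' throughout: if $\Phi$ is operator convex and $\psi$ monotone increasing and convex, then $\psi(\Phi(\lambda A+(1-\lambda)B))\le\psi(\lambda\Phi(A)+(1-\lambda)\Phi(B))\le\lambda\psi(\Phi(A))+(1-\lambda)\psi(\Phi(B))$.

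This immediately disposes of the aligned regimes. In $(i)$ with $0<s\le 1$, the operator concavity of $A\mapsto K^*A^pK$ (from $0\le p\le1$) and the monotone concavity of $\psi_s$ give concavity. In $(ii)$ with $s\ge1$, and in $(iii)$ with $s\ge1$, the operator convexity of $A\mapsto K^*A^pK$ (for $-1\le p\le0$, resp. $1\le p\le2$) together with the monotone convexity of $\psi_s$ give convexity. What is left are three misaligned windows: $(i)$ with $1\le s\le 1/p$, $(iii)$ with $1/p\le s<1$, and $(ii)$ with $0<s<1$.

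The first two windows are exactly Carlen--Lieb's theorem for $\Upsilon_{p,q}(A)=\tr(B^*A^pB)^{q/p}$ under the substitution $q=ps$, since the outer power is $q/p=s$: the range $0\le p\le q\le1$ becomes $1\le s\le 1/p$ (matching $(i)$), and the range $1\le p\le2,\ q\ge1$ becomes $1\le p\le2,\ s\ge 1/p$ (which is all of $(iii)$). I would therefore invoke \cite{CL08}, whose own proof runs through the tracial Young inequality in precisely the variational form this paper is built on, or alternatively Epstein's Herglotz/Pick-function argument \cite{Eps73,Hia16}.

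I expect the window $(ii)$ with $0<s<1$ to be the main obstacle. Here $A\mapsto K^*A^pK$ is operator convex but $\psi_s$ is concave, so the variational identity $\tr X^s=\min_{Y>0}\{s\tr(XY)+(1-s)\tr Y^{-s/(1-s)}\}$ is of no use: one would need $(A,Y)\mapsto\tr(K^*A^pK\,Y)$ to be jointly convex, yet already the scalar profile $a^py$ has indefinite Hessian $\bigl(\begin{smallmatrix} p(p-1)a^{p-2}y & pa^{p-1}\\ pa^{p-1} & 0\end{smallmatrix}\bigr)$ of determinant $-p^2a^{2p-2}<0$. The way around this is the complex-analytic route, showing that $z\mapsto\tr(K^*A(z)^pK)^s$ is a Herglotz function along matrix line segments $A(z)$, which yields convexity for the full range $s>0$ when $-1\le p\le 0$; this is the content of the negative-power convexity results of \cite{CFL16,Hia16}, which I would cite to close the argument.
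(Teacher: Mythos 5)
Your proposal is correct, but the comparison here is lopsided: the paper offers no proof of this statement at all. Theorem 4.1 is introduced with ``we recall some convexity/concavity theorems,'' i.e.\ it is quoted as known background (resting on \cite{CL08}, \cite{Eps73}, \cite{Hia13}, \cite{CFL19}, \cite{Z20}) and then used as input for Theorem 4.2. Your treatment therefore supplies strictly more argument than the source. The aligned/misaligned decomposition is sound: $X\mapsto \tr X^s$ is L\"{o}wner-monotone for every $s>0$ and convex (resp.\ concave) exactly for $s\ge 1$ (resp.\ $0<s\le 1$), while $A\mapsto K^*A^pK$ is operator concave for $0\le p\le 1$ and operator convex for $-1\le p\le 0$ and $1\le p\le 2$, so the sandwich inequality settles (i) for $0<s\le 1$ and (ii), (iii) for $s\ge 1$. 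The substitution $q=ps$ does carry Carlen--Lieb's concavity range $0\le p\le q\le 1$ onto the window $1\le s\le 1/p$ of (i) and the convexity range $1\le p\le 2$, $q\ge 1$ onto all of (iii), so those windows are correctly discharged by \cite{CL08}. For the last window, (ii) with $0<s<1$, your Hessian computation correctly identifies why partial minimization over the Young-type representation fails (the integrand is not jointly convex, and a minimum of convex functions proves nothing), and deferring to the analytic results of \cite{CFL16,Hia16} is legitimate; note that the paper's own Theorem 4.3(ii), quoted from Hiai \cite{Hia13}, asserts exactly this convexity for every symmetric norm, hence in particular for the trace. In sum: the paper's route is wholesale citation; yours isolates which regimes are elementary consequences of operator convexity and which genuinely require the deep theorems, at the price of still resting on those theorems for the two hard windows --- which is precisely the status the result has in the paper, so nothing is lost.
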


We firstly recover the following well-known conclusions by using Corollary 3.4. For more information of Theorem 4.2 we refer the readers to \cite{CFL19, Z20}.
\begin{theorem}
Let $K\in \mathcal{M}_n$ be arbitrary, and $A,B\in \mathcal{P}_n.$
\begin{enumerate}
\item[(i)]  If  $ 0\le p,q \le1 $ and $ 0<s\le 1/(p+q), $  then $ \Psi_{p,q,s}(A,B) $ is jointly concave.
\item[(ii)] If  $ -1\le p,q\le 0 $ and $ s>0,  $ then $ \Psi_{p,q,s}(A,B) $ is jointly convex.
\item[(iii)] If  $ 1\le p\le 2, -1\le q\le 0, (p,q)\neq (1,-1),$  and $ s\ge 1/(p+q), $  then $ \Psi_{p,q,s}(A,B) $ is jointly convex.
\end{enumerate}
\end{theorem}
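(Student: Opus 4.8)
The plan is to read joint concavity in case (i) and joint convexity in cases (ii), (iii) directly off the variational formulas of Corollary 3.4, using the two standard facts that a pointwise infimum of jointly concave functions is jointly concave and a pointwise supremum of jointly convex functions is jointly convex. Throughout I would use the additive (second) forms of those formulas. The essential feature is that, for fixed $Z>0$, the bracket splits into a term depending only on $A$ and a term depending only on $B$; joint concavity/convexity in $(A,B)$ thus reduces to separate concavity/convexity of two single-variable trace functions, each of the shape $\tr(\tilde K^{*}C^{r}\tilde K)^{t}=\Upsilon_{r,t}(C)$ after absorbing $K$ and $Z^{\pm 1/2}$ into $\tilde K$, where Theorem 4.1 can be brought to bear.

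For case (i) I would invoke the min-representation of Corollary 3.4(i), which applies since $0<s\le 1/(p+q)\le 1/q$. For fixed $Z$ the first summand $A\mapsto\tr(Z^{-1/2}K^{*}A^{p}KZ^{-1/2})^{s/(1-sq)}$ carries the nonnegative coefficient $1-sq$ and is concave by Theorem 4.1(i), since $0\le p\le 1$ and the requirement $s/(1-sq)\le 1/p$ is exactly $s(p+q)\le 1$. The second summand $B\mapsto\tr(Z^{1/2}B^{q}Z^{1/2})^{1/q}$ has the nonnegative coefficient $sq$ and outer exponent precisely $1/q$, so it is concave by Theorem 4.1(i) for $0\le q\le 1$. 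Hence the bracket is jointly concave for every $Z$, and the infimum over $Z$ preserves concavity. The boundary subcases $p=0$ or $q=0$ collapse to $\Upsilon_{p,s}$ and are dispatched directly by Theorem 4.1.

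For cases (ii) and (iii) I would use the max-representation of Corollary 3.4(ii), valid because $q<0$; the value $q=0$ is separate and reduces to $\Upsilon_{p,s}$ via Theorem 4.1. Here the aim is to show the bracket is jointly convex for each $Z$, so that the supremum is jointly convex. The first summand carries the positive coefficient $1-sq>0$ and equals $A\mapsto\tr(Z^{-1/2}K^{*}A^{p}KZ^{-1/2})^{s/(1-sq)}$; in case (ii) it is convex by Theorem 4.1(ii), since $-1\le p\le 0$ and any positive outer exponent is admissible, and in case (iii) by Theorem 4.1(iii), because for $p+q>0$ the inequality $s/(1-sq)\ge 1/p$ is precisely $s(p+q)\ge 1$.

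What remains, and is the heart of the matter, is the second summand $sq\,\tr(Z^{1/2}B^{q}Z^{1/2})^{1/q}$: since $sq<0$, its convexity in $B$ is equivalent to the \emph{concavity} of $B\mapsto\tr(Z^{1/2}B^{q}Z^{1/2})^{1/q}$ for $-1\le q<0$. This falls outside Theorem 4.1, which treats only positive outer exponents (as the text explicitly notes), and the obvious composition argument does not close: $B\mapsto B^{q}$ is operator convex and $M\mapsto\tr M^{1/q}$ is convex and decreasing, but these two facts do not chain into concavity. I would therefore isolate this as a separate lemma — an extension of Theorem 4.1 to negative outer exponents, equivalently the symmetric-anti-norm form of the Lieb/Epstein concavity theorem announced in the introduction. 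The natural tools are the reverse Hölder and Young inequalities of Section 2 together with the concavity (super-additivity) of symmetric anti-norms; rewriting $\tr(Z^{1/2}B^{q}Z^{1/2})^{1/q}=\tr(B^{q/2}ZB^{q/2})^{1/q}$ and rerunning the same split-and-optimize scheme with an anti-norm in place of a norm is the route I expect to succeed. Once this concavity lemma is available, the bracket in Corollary 3.4(ii) is jointly convex for each fixed $Z$, and the supremum over $Z$ yields the joint convexity asserted in (ii) and (iii).
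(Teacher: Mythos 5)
Your case (i) and your treatment of the first summand in cases (ii)--(iii) track the paper's own proof exactly: Corollary 3.4 plus Theorem 4.1, positive/negative coefficients, then infimum-of-concave (resp.\ supremum-of-convex) over $Z$; your explicit handling of the boundary subcases $p=0$, $q=0$ (where Corollary 3.4 does not literally apply) is in fact slightly more careful than the paper. However, the step you yourself single out as ``the heart of the matter'' --- concavity of $B \mapsto \tr (Z^{1/2}B^{q}Z^{1/2})^{1/q}$ for $-1\le q<0$ --- is a genuine gap in your proposal: you do not prove it, you only announce a lemma that you ``expect'' can be established by anti-norm and reverse H\"{o}lder machinery. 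Leaving the decisive claim of cases (ii) and (iii) to an unestablished lemma means the proof does not close; and the route you sketch would essentially amount to reproving an Epstein-type concavity theorem from scratch.

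The missing idea is a one-line inversion, and it shows that your assertion that this term ``falls outside Theorem 4.1'' is incorrect. For positive definite $W$ one has $W^{1/q}=(W^{-1})^{-1/q}$, so setting $u=-q\in(0,1]$,
\begin{eqnarray*}
\tr \left(Z^{\frac{1}{2}}B^{q}Z^{\frac{1}{2}}\right)^{\frac{1}{q}}
=\tr \left(Z^{-\frac{1}{2}}B^{u}Z^{-\frac{1}{2}}\right)^{\frac{1}{u}}
=\Upsilon_{u,\frac{1}{u}}(B), \qquad \mbox{with} \quad K=Z^{-\frac{1}{2}},
\end{eqnarray*}
and Theorem 4.1(i) with inner exponent $u\in(0,1]$ and outer exponent $1/u$ (the Epstein case $s=1/p$) gives concavity in $B$ directly. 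This is exactly how the paper disposes of the term --- it simply cites Theorem 4.1(i), the inversion being implicit. With this observation, the brackets in Corollary 3.4(ii) are jointly convex for every fixed $Z$ (positive coefficient times convex plus negative coefficient times concave), the supremum over $Z$ preserves convexity, and your argument for (ii) and (iii) closes with no new lemma and no anti-norm machinery needed.
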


\begin{proof}
Under the conditions of $(i),$ we have
\begin{eqnarray*}
0\le p\le 1, \quad \frac{s}{1-sq}=\frac{1}{s^{-1}-q}\le \frac{1}{p},\qquad \mbox{and} \qquad
0\le q\le 1.
\end{eqnarray*}
Hence it follows from Theorem 4.1 $(i)$ that
\begin{eqnarray*}
\tr (Z^{-\frac{1}{2}}K^*A^pKZ^{-\frac{1}{2}})^{\frac{s}{1-sq}}
\end{eqnarray*}
is concave in $A$.
And
\begin{eqnarray*}
\tr (Z^{\frac{1}{2}}B^qZ^{\frac{1}{2}})^{\frac{1}{q}}
\end{eqnarray*}
is concave in $B$.
Since $(1-sq)$ and $sq$ are both positive, we have
\begin{eqnarray*}
(1-sq)\tr (Z^{-\frac{1}{2}}K^*A^pKZ^{-\frac{1}{2}})^{\frac{s}{1-sq}}+
sq \tr (Z^{\frac{1}{2}}B^qZ^{\frac{1}{2}})^{\frac{1}{q}}
\end{eqnarray*}
is concave in $(A,B)$. Then by the variational representation $(3.11)$ and Lemma 13 of \cite{CFL19}  we have $\Psi_{p,q,s}(A,B)$ is jointly concave.

Under the conditions of $(ii),$ we have
\begin{eqnarray*}
-1\le p\le 0, \quad \frac{s}{1-sq}=\frac{1}{s^{-1}-q}>0,\qquad \mbox{and} \qquad
-1\le q\le 0.
\end{eqnarray*}
Hence it follows from Theorem 4.1 $(ii)$ that
\begin{eqnarray*}
\tr (Z^{-\frac{1}{2}}K^*A^pKZ^{-\frac{1}{2}})^{\frac{s}{1-sq}}
\end{eqnarray*}
is convex in $A$.
And it follows from Theorem 4.1 $(i)$ that
\begin{eqnarray*}
\tr (Z^{\frac{1}{2}}B^qZ^{\frac{1}{2}})^{\frac{1}{q}}
\end{eqnarray*}
is concave in $B$.
Since $(1-sq)$ is positive and $sq$ is negative, we have
\begin{eqnarray*}
(1-sq)\tr (Z^{-\frac{1}{2}}K^*A^pKZ^{-\frac{1}{2}})^{\frac{s}{1-sq}}+
sq \tr (Z^{\frac{1}{2}}B^qZ^{\frac{1}{2}})^{\frac{1}{q}}
\end{eqnarray*}
is convex in $(A,B)$.
Hence by the variational representation $(3.12)$  and Lemma 13 of \cite{CFL19}, we have $\Psi_{p,q,s}(A,B)$ is jointly convex.

Under the conditions of $(iii),$ we have
\begin{eqnarray*}
1\le p\le 2, \quad \frac{s}{1-sq}=\frac{1}{s^{-1}-q}\ge \frac{1}{p},\qquad \mbox{and} \qquad
-1\le q\le 0.
\end{eqnarray*}
Hence it follows from Theorem 4.1 $(iii)$ that
\begin{eqnarray*}
\tr (Z^{-\frac{1}{2}}K^*A^pKZ^{-\frac{1}{2}})^{\frac{s}{1-sq}}
\end{eqnarray*}
is convex in $A$.
And it follows from Theorem 4.1 $(i)$ that
\begin{eqnarray*}
\tr (Z^{\frac{1}{2}}B^qZ^{\frac{1}{2}})^{\frac{1}{q}}
\end{eqnarray*}
is concave in $B$.
Since $(1-sq)$ is positive and $sq$ is negative, we have
\begin{eqnarray*}
(1-sq)\tr (Z^{-\frac{1}{2}}K^*A^pKZ^{-\frac{1}{2}})^{\frac{s}{1-sq}}+
sq \tr (Z^{\frac{1}{2}}B^qZ^{\frac{1}{2}})^{\frac{1}{q}}
\end{eqnarray*}
is convex in $(A,B)$.
Hence by the variational representation $(3.12)$ and Lemma 13 of \cite{CFL19}, we have $\Psi_{p,q,s}(A,B)$ is jointly convex.
\end{proof}

More generally, Hiai \cite{Hia13} proved the following results, which can be viewed as generalizations of the Epstein's theorem for symmetric (anti-)norms.
\begin{theorem} Set $K\in \mathcal{M}_n.$ And set $\normmm{\cdot}_{!}$ be symmetric anti-norm and $\normmm{\cdot}$ be symmetric norm for matrix.
\begin{enumerate}
\item[(i)]  If  $ 0< p\le 1 $ and $ 0<s\le 1/p, $  then $ \normmm{(K^*A^pK)^s}_{!} $ is concave for $A\in \mathcal{P}_n$.
\item[(ii)] If  $ -1\le p< 0 $ and $ s>0,  $ then $ \normmm{(K^*A^pK)^s} $ is convex for $A\in \mathcal{P}_n$.
\item[(iii)] If  $ 1\le p\le 2 $  and $ s\ge 1/p, $  then $ \normmm{(K^*A^pK)^s} $ is convex for $A\in \mathcal{P}_n$.
\end{enumerate}
\end{theorem}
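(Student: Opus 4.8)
The plan is to reduce each statement to a one-parameter family of scalar trace inequalities by majorization, exploiting the fact recorded in Section 2 that symmetric norms are monotone under weak submajorization $\prec_w$, together with the companion fact that symmetric anti-norms are monotone under weak supermajorization (larger partial sums of the smallest eigenvalues yield a larger anti-norm) and are super-additive. Fixing $A_0,A_1\in\mathcal{P}_n$ and writing $\Phi(A)=(K^*A^pK)^s$, I would first aim to prove the midpoint majorization
\[
\Phi\Big(\tfrac{A_0+A_1}{2}\Big)\prec_w \tfrac12\big(\Phi(A_0)+\Phi(A_1)\big)
\]
in the convex cases (ii) and (iii), and the reverse supermajorization $\tfrac12(\Phi(A_0)+\Phi(A_1))\prec^w \Phi(\tfrac{A_0+A_1}{2})$ in the concave case (i). Granting these, case (ii)/(iii) follows from $\normmm{\Phi(\tfrac{A_0+A_1}{2})}\le\normmm{\tfrac12(\Phi(A_0)+\Phi(A_1))}\le\tfrac12\normmm{\Phi(A_0)}+\tfrac12\normmm{\Phi(A_1)}$, using $\prec_w$-monotonicity and then the triangle inequality; case (i) follows symmetrically from $\normmm{\Phi(\tfrac{A_0+A_1}{2})}_!\ge\normmm{\tfrac12(\Phi(A_0)+\Phi(A_1))}_!\ge\tfrac12\normmm{\Phi(A_0)}_!+\tfrac12\normmm{\Phi(A_1)}_!$, using $\prec^w$-monotonicity and super-additivity. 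Continuity of $\Phi$ and of the (anti-)norm then upgrades midpoint convexity/concavity to the full statement.

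Next I would translate the majorization relations into weighted trace inequalities. By Ky Fan's maximum principle, for $X\in\mathcal{P}_n$ the sum of the $k$ largest eigenvalues equals $\max_P\tr(PX)$ over rank-$k$ orthogonal projections $P$, while the minimum principle gives the sum of the $k$ smallest eigenvalues as $\min_P\tr(PX)$. Hence the submajorization in (ii)/(iii) reduces, for each $k$, to $\tr\!\big(P\Phi(\tfrac{A_0+A_1}{2})\big)\le\tfrac12\tr\!\big(P\Phi(A_0)\big)+\tfrac12\tr\!\big(P\Phi(A_1)\big)$ applied to the maximizing $P$ on the left, and the supermajorization in (i) reduces to the reverse inequality for every $P$ followed by minimizing. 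Thus everything comes down to the weighted scalar claim that $A\mapsto\tr\big(Y(K^*A^pK)^s\big)$ is convex in cases (ii),(iii) and concave in case (i), for every fixed $Y\ge 0$ (in particular every projection).

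This weighted trace inequality is the crux and the main obstacle: it is a genuine strengthening of Theorem 4.1, which is precisely the special case $Y=I$. I expect to establish it by the same operator-convexity/variational argument that underlies Theorem 4.1, now carrying the positive weight $Y$ through the estimates, or else by invoking the known weighted extensions in the literature. The anti-norm case (i) carries the additional wrinkle that one must use the defining super-additivity and concavity of $\normmm{\cdot}_!$ in place of the triangle inequality; alternatively, one may pass through the representation $\normmm{X}_!=\normmm{X^{-r}}^{-1/r}$ to convert the claim into a symmetric-norm statement with negated exponents, at the cost of checking that the parameter ranges transform correctly. Assembling the weighted trace bound, Ky Fan's max/min principles, the submajorization/supermajorization monotonicity from Section 2, and a final continuity argument then yields all three parts.
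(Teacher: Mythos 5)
The first thing to note is that the paper itself offers no proof of this statement: Theorem 4.3 is quoted from Hiai \cite{Hia13} (``More generally, Hiai proved\dots''), so your proposal has to stand entirely on its own. Its skeleton --- Ky Fan max/min principles, eigenvalue majorization, then monotonicity of the (anti-)norm plus the triangle inequality or superadditivity --- is indeed the right framework, and is close in spirit to how such results are proved. But the claim you reduce everything to, namely that $A\mapsto \tr (Y(K^*A^pK)^s)$ is convex (cases (ii),(iii)) or concave (case (i)) for \emph{every fixed} $Y\ge 0$, is not merely ``a strengthening of Theorem 4.1 that remains to be established''; it is false. Since a Hermitian matrix $M$ satisfies $\tr (YM)\ge 0$ for all $Y\ge 0$ if and only if $M\ge 0$, your claim for all weights is \emph{equivalent} to operator (L\"owner-order) midpoint convexity/concavity of $\Phi(A)=(K^*A^pK)^s$. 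That fails in the stated ranges: in case (ii) take $K=I$, $p=-1$, $s=2$, so the claim requires $A\mapsto A^{-2}$ to be operator convex; in case (iii) take $K=I$, $p=2$, $s=2$, requiring $A\mapsto A^4$ to be operator convex; both are false because $t^r$ is operator convex only for $r\in[-1,0]\cup[1,2]$. Likewise, operator concavity of $(K^*A^pK)^s$ fails in general in the new range $1<s\le 1/p$ of case (i) (for $s\le 1$ it holds by the operator Jensen inequality, but those are exactly the easy cases). So the ``known weighted extensions in the literature'' you hope to invoke cannot exist; the entire point of Epstein--Hiai-type theorems is that the operator version is false and only trace, Ky Fan, and symmetric (anti-)norm versions survive.

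The repair lies in the observation you make and then discard: Ky Fan's principle requires the trace inequality only at the extremal rank-$k$ spectral projection of the midpoint matrix $\Phi(\tfrac{A_0+A_1}{2})$, a projection that \emph{depends on} $A_0,A_1$. Keeping that dependence is precisely what makes the statement weaker than operator convexity and true. The correct intermediate claim is convexity/concavity of the Ky Fan sums $A\mapsto \sum_{i\le k}\lambda_i\big((K^*A^pK)^s\big)$ for each $k$, i.e. the majorization $\lambda\big(\Phi(\tfrac{A_0+A_1}{2})\big)\prec_w \tfrac12\lambda(\Phi(A_0))+\tfrac12\lambda(\Phi(A_1))$ against the average of the \emph{eigenvalue vectors}; your version against the matrix average $\tfrac12(\Phi(A_0)+\Phi(A_1))$ is again stronger than needed. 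Proving these Ky Fan inequalities is the genuine content of Hiai's paper and uses Epstein's complex-analytic (Pick function) method; it is not a routine matter of carrying a weight through the proof of Theorem 4.1, whose proof this paper does not contain either. Two further gaps, minor by comparison: monotonicity of a general symmetric anti-norm under weak supermajorization is not recorded in Section 2 and needs an argument (it follows from superadditivity together with the Birkhoff/Hardy--Littlewood--P\'olya characterization, $x\prec^w y$ with $x,y\ge 0$ giving $x\ge Dy$ for some doubly stochastic $D$); and with the standard convention for $\prec^w$ the direction of your monotonicity statement in case (i) is reversed.
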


We now consider the extension of the Lieb's concavity theorem for symmetric anti-norm.

\begin{theorem}
Let $K\in \mathcal{M}_n$ and $A,B\in \mathcal{P}_n.$ Let $\normmm{\cdot}_{!}$ be a symmetric anti-norm which ensures the H\"{o}lder inequality:
\begin{eqnarray}
\normmm{\;|ST|^r\;}_!^{\frac{1}{r}}\le \normmm{\;|S|^{r_1}\;}_!^{\frac{1}{r_1}}\cdot \normmm{\;|T|^{r_2}\;}_!^{\frac{1}{r_2}},
\end{eqnarray}
for $S,T\in P_n,$ and $r, r_1, r_2>0$ with $\frac{1}{r}=\frac{1}{r_1}+\frac{1}{r_2}.$
Then for $ 0\le p,q \le1 $ and $ 0<s\le 1/(p+q), $  we have
\begin{eqnarray*} (A,B)\mapsto \normmm{(B^{\frac{q}{2}}K^*A^pKB^{\frac{q}{2}})^s}_{!}
\end{eqnarray*}
 is jointly concave.
\end{theorem}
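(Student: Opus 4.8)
The plan is to mirror the argument of Theorem 4.2(i), replacing the trace $\tr$ by the symmetric anti-norm $\normmm{\cdot}_!$ and replacing the representation of Corollary 3.4(i) by an anti-norm analogue of the sum-form representation in Theorem 3.2(i). First I would produce that representation. Setting $r_0=2s$, $r_1=\frac{2s}{1-sq}$, $r_2=\frac{2}{q}$ (so that $r_0,r_1,r_2>0$ and $\frac{1}{r_0}=\frac{1}{r_1}+\frac{1}{r_2}$, which is legitimate because $0<s\le 1/(p+q)$ and $0<q\le 1$ force $sq<1$) and taking $S=A^{p/2}KZ^{-1/2}$, $T=Z^{1/2}B^{q/2}$, the hypothesized Hölder inequality $(4.1)$ gives
\[
\normmm{(B^{\frac{q}{2}}K^*A^pKB^{\frac{q}{2}})^s}_!\le \normmm{(Z^{-\frac{1}{2}}K^*A^pKZ^{-\frac{1}{2}})^{\frac{s}{1-sq}}}_!^{1-sq}\cdot\normmm{(Z^{\frac{1}{2}}B^qZ^{\frac{1}{2}})^{\frac{1}{q}}}_!^{sq}.
\]
The crucial point is that $(4.1)$ is an \emph{upper} bound, so it will yield a \emph{minimum} (not a maximum) representation, which is exactly what joint concavity requires.

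Applying the scalar weighted arithmetic--geometric mean inequality $u^{1-sq}v^{sq}\le (1-sq)u+sq\,v$ to the two nonnegative numbers on the right, I obtain, for every $Z>0$,
\[
\normmm{(B^{\frac{q}{2}}K^*A^pKB^{\frac{q}{2}})^s}_!\le (1-sq)\normmm{(Z^{-\frac{1}{2}}K^*A^pKZ^{-\frac{1}{2}})^{\frac{s}{1-sq}}}_!+sq\,\normmm{(Z^{\frac{1}{2}}B^qZ^{\frac{1}{2}})^{\frac{1}{q}}}_!.
\]
To see that this bound is attained I would insert the same optimizer $Z=B^{-q}\sharp_{sq}(K^*A^pK)$ used in the proof of Theorem 3.2(i); the geometric-mean identities established there are purely spectral and unitarily invariant, so they hold verbatim for $\normmm{\cdot}_!$ and show that both anti-norms on the right reduce to $\normmm{(B^{\frac{q}{2}}K^*A^pKB^{\frac{q}{2}})^s}_!$, whence the whole right-hand side equals $\normmm{(B^{\frac{q}{2}}K^*A^pKB^{\frac{q}{2}})^s}_!$. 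This establishes the representation as a genuine minimum over $Z>0$.

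Then I would check concavity of each summand. For the first term, writing $Z^{-\frac{1}{2}}K^*A^pKZ^{-\frac{1}{2}}=(KZ^{-\frac{1}{2}})^*A^p(KZ^{-\frac{1}{2}})$ and applying Theorem 4.3(i) with exponents $p$ and $\frac{s}{1-sq}$, the required hypothesis $\frac{s}{1-sq}\le\frac{1}{p}$ is equivalent to $s(p+q)\le 1$, that is, precisely $s\le 1/(p+q)$; hence the first term is concave in $A$. For the second term, $Z^{\frac{1}{2}}B^qZ^{\frac{1}{2}}=(Z^{\frac{1}{2}})^*B^q(Z^{\frac{1}{2}})$ is already in the form required by Theorem 4.3(i) with exponents $q$ and $\frac{1}{q}$ (the admissible boundary case $s=1/p$), so it is concave in $B$. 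Since $1-sq>0$ and $sq>0$, the integrand is a positive combination of a concave function of $A$ alone and a concave function of $B$ alone, hence jointly concave in $(A,B)$ for each fixed $Z$.

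Finally, I would invoke the elementary fact that the pointwise infimum of a family of jointly concave functions is jointly concave, so the minimum over $Z>0$ is jointly concave, which proves the claim. The degenerate values $p=0$ or $q=0$ (where $\frac{s}{1-sq}$ or $\frac{1}{q}$ degenerates) I would dispose of separately: in either case the functional reduces to one depending on a single variable and concavity follows directly from Theorem 4.3(i). I expect the main obstacle to be the first step -- showing that the min-representation is valid for the anti-norm -- which hinges on $(4.1)$ pointing in the direction that produces an upper bound and on verifying that the explicit geometric-mean optimizer attains equality; once this representation is secured, the concavity argument is routine given Theorem 4.3 and the stability of concavity under infima.
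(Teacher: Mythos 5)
Your proposal is correct, but it runs on a different decomposition than the paper's own proof. The paper establishes Theorem 4.4 by transplanting to anti-norms the \emph{symmetric} splitting of Theorem 3.1(i): from the hypothesized H\"{o}lder inequality $(4.1)$ it gets
\begin{eqnarray*}
\normmm{(B^{\frac{q}{2}}K^*A^pKB^{\frac{q}{2}})^s}_{!}
= \min_{Z\in \mathcal{P}_n}\left\{
\frac{p}{p+q}\normmm{ (Z^{-\frac{1}{2}}K^*A^pKZ^{-\frac{1}{2}})^{\frac{s(p+q)}{p}}}_!+
\frac{q}{p+q}\normmm{ (Z^{\frac{1}{2}}B^qZ^{\frac{1}{2}})^{\frac{s(p+q)}{q}}}_!\right\},
\end{eqnarray*}
and then applies Theorem 4.3(i) to each summand, the conditions $\frac{s(p+q)}{p}\le \frac{1}{p}$ and $\frac{s(p+q)}{q}\le \frac{1}{q}$ both being immediate from $s\le 1/(p+q)$. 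You instead carry over the \emph{asymmetric} splitting (weights $1-sq$ and $sq$, exponents $\frac{s}{1-sq}$ and $\frac{1}{q}$) that the paper itself uses only in the trace setting, in the proof of Theorem 4.2(i) via Corollary 3.4(i). The essential ingredients are identical in both routes: $(4.1)$ yields the Young-type upper bound, the optimizer $Z=B^{-q}\sharp_{sq}(K^*A^pK)$ attains it through purely spectral, unitarily invariant identities, Theorem 4.3(i) gives concavity of each term, and a pointwise minimum of jointly concave functions is jointly concave. What each choice buys: the paper's symmetric split treats $A$ and $B$ on equal footing and its exponent conditions fall out of $s\le 1/(p+q)$ with no strict inequality needed; your split keeps the $B$-exponent fixed at $1/q$ but requires $1-sq>0$, i.e.\ effectively $p>0$, so the degenerate cases $p=0$ and $q=0$ must be treated separately --- which you do explicitly, and this is in fact a point of care the paper omits, since its own exponents $\frac{s(p+q)}{p}$, $\frac{s(p+q)}{q}$ also degenerate there. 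Your checking that $\frac{s}{1-sq}\le\frac{1}{p}$ is exactly equivalent to $s(p+q)\le 1$ is correct, as is the boundary application of Theorem 4.3(i) with exponent $1/q$.
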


\begin{proof} Since
\begin{eqnarray*}
0\le p \le 1, \quad \frac{s(p+q)}{p}\le \frac{1}{p},\qquad \mbox{and} \qquad
0\le q\le 1, \quad
\frac{s(p+q)}{q}\le \frac{1}{q},
\end{eqnarray*}
it follows from Theorem 4.3 $(i)$ that
\begin{eqnarray*}
\normmm{ (Z^{-\frac{1}{2}}K^*A^pKZ^{-\frac{1}{2}})^{\frac{s(p+q)}{p}}}_{!}
\end{eqnarray*}
is concave in $A$, and
\begin{eqnarray*}
\normmm{ (Z^{\frac{1}{2}}B^qZ^{\frac{1}{2}})^{\frac{s(p+q)}{q}}}_{!}
\end{eqnarray*}
is concave in $B$.
The H\"{o}lder inequality $(4.1)$ ensures the corresponding Young inequality and hence a similar version of variational representation for symmetric anti-norm:
\begin{eqnarray*}
&&\normmm{(B^{\frac{q}{2}}K^*A^pKB^{\frac{q}{2}})^s}_{!} \\
&=& \min_{Z\in \mathcal{P}_n}\left\{
\frac{p}{p+q}\normmm{ (Z^{-\frac{1}{2}}K^*A^pKZ^{-\frac{1}{2}})^{\frac{s(p+q)}{p}}}_!+
\frac{q}{p+q}\normmm{ (Z^{\frac{1}{2}}B^qZ^{\frac{1}{2}})^{\frac{s(p+q)}{q}}}_!\right\}
\end{eqnarray*}
Hence,
it follows that $ \normmm{(B^{\frac{q}{2}}K^*A^pKB^{\frac{q}{2}})^s}_{!} $ is jointly concave in $(A,B)$.
\end{proof}

\begin{remark}
The Schatten quasi-norm $\|\cdot\|_p$ for $p\in (0,1)$ is a symmetric anti-norm, which is concave and satisfying the H\"{o}lder inequality and also the reverse H\"{o}lder inequality.
\end{remark}

{\bf Acknowledgements.}
The author acknowledges support from the Natural Science Foundation of Jiangsu Province for Youth, Grant No: BK20190874.

\end{document}